\newcommand{\gio}[1]{{\color{black}#1}}
\theoremstyle{definition}
\theoremstyle{remark}
\numberwithin{equation}{section}
\newcommand{\be}{\begin{equation}}
\newcommand{\ee}{\end{equation}}
\newcommand{\R}{\mathbb R}
\newcommand{\N}{\mathbb N}
\newcommand{\eps}{\varepsilon}
\newcommand{\loc}{{{\tiny{\mbox{loc}}}}}
\newcommand{\comment}[1]{}
\newcommand\intn{- \hspace{-0.42cm} \int}
\newcommand{\abs}[1]{\big\lvert {#1} \big\rvert}
\newcommand{\norm}[2]{\big\Vert {#1} \big\Vert_{#2}}
\newcommand\ddfrac[2]{\frac{\displaystyle #1}{\displaystyle #2}}
\theoremstyle{definition}
\newtheorem{teo}{Theorem}[section]
\newtheorem{corollary}[teo]{Corollary}
\newtheorem{lemma}[teo]{Lemma}
\newtheorem{theorem}[teo]{Theorem}
\newtheorem{proposition}[teo]{Proposition}
\newtheorem{definition}[teo]{Definition}
\newtheorem{remark}[teo]{Remark}
\begin{document}

\title{Liouville theorems and optimal regularity in elliptic equations}

\author[G. Tortone]{Giorgio Tortone}
\address {Giorgio Tortone \newline \indent
	Dipartimento di Matematica, Universit\`a di Pisa \newline \indent
	Largo B. Pontecorvo 5, 56127 Pisa - ITALY}
\email{giorgio.tortone@dm.unipi.it}

\thanks{The author is supported by the European Research Council's (ERC) project n.853404 ERC VaReg - \it Variational approach to the regularity of the free boundaries\rm, financed by the program Horizon 2020. The author is a member of INDAM-GNAMPA}
\subjclass[2020] {35R35, 35B53, 35B65.
}
\keywords{Optimal regularity; elliptic equations; Alt-Caffarelli-Friedman monotonicity formula; anisotropic free-boundary problems; Liouville-type theorems.
}
\begin{abstract}
The objective of this paper is to establish a connection between the problem of optimal regularity among solutions to elliptic PDEs with measurable coefficients and the Liouville property at infinity. Initially, we address the two-dimensional case by proving an Alt-Caffarelli-Friedman type monotonicity formula, enabling the proof of optimal regularity and the Liouville property for multiphase problems.\\
In higher dimensions, we delve into the role of monotonicity formulas in characterizing optimal regularity. By employing a hole-filling technique, we present a distinct ``almost-monotonicity'' formula that implies H\"{o}lder regularity of solutions. Finally, we explore the interplay between the least growth at infinity and the exponent of regularity by combining blow-up and $G$-convergence arguments.
\end{abstract}
\maketitle
\section{Introduction}
The regularity problem for linear elliptic PDEs in its simplest form consists in proving local H\"{o}lder continuity of weak solutions to
\be\label{equation}
\mathrm{div}(A\nabla u)=\frac{\partial}{\partial x_i}\left(a_{ij}(x)\frac{\partial}{\partial x_j} u \right)=0 \quad\mbox{in }B_1,
\ee
where $A =(a_{ij})_{ij}$ is a positive-definite matrix with measurable coefficients satisfying
\be\label{matrix.ell}
 \lambda \abs{\xi}^2 \leq  (A(x)\xi\cdot\xi) \leq L \abs{\xi}^2\quad\text{for $\xi \in \R^n, x \in B_1$},
\ee
for some $0<\lambda\leq L<+\infty$. Initially, such result was obtained in the two-dimensional case by C. B. Morrey \cite{morrey1938} (see also \cite{widman}), but his proof could not be generalized to higher dimension. Finally, in their milestone papers, E. De Giorgi \cite{degiorgi} and J. Nash \cite{nash} solved independently the corresponding problem in every dimension  
(see also \cite{nash} for the same result for parabolic equations) by showing the existence of two positive constants $\alpha^* =\alpha^*(n,\lambda,L), C=C(n,\lambda,L)$ 
such that
$$
|u(x)-u(y)|\leq C|x-y|^{\alpha^*}\norm{u}{L^2(B_1)},\quad\mbox{for every } x,y\in B_{1/2}.
$$
A few years later, J. Moser in \cite{moser1961,moser1960,moser1964} gave a simpler derivation of the H\"{o}lder regularity for weak solutions of both elliptic and parabolic equations by proving the validity of a partial Harnack's type inequality for subsolutions of the problem (see also Proposition \ref{moser.prop}). Despite these results, the dependence of the optimal regularity $\alpha^*=\alpha^*(n,\lambda,L)$ on the elliptic constants is only known in the two-dimensional case. Indeed,
in \cite[Theorem 1]{morrey1938} the author gave a first lower bound for the optimal regularity, that is $\alpha^*(2,\lambda,L) \geq \lambda/(2L)$, but only in \cite[Theorem 1]{piccinini-spagnolo}, L. Piccinini and S. Spagnolo proved that
$$
\alpha^*(2,\lambda,L) = \sqrt{\frac{\lambda}{L}},
$$
which is sharp due to the explicit example of N.G. Meyers \cite[Section 5]{meyers} (see Remark \ref{counter}). The same example gives an upper bound for the optimal regularity of the form
$$
\alpha^*(n,\lambda,L) \leq \sqrt{\left(\frac{n-2}{2}\right)^2 +\frac{\lambda}{L}(n-1)}-\frac{n-2}{2},
$$
for every $n\geq 3$, which is as far as we know the only estimate in higher dimensions.\\

Among other implications of the Harnack's inequality, D. Gilbarg and J. Serrin \cite{gilbargserrin} noticed that it allows to study the asymptotic behavior of solutions to \eqref{equation} near infinity. Indeed, following their iteration argument, in \cite[Section 6]{moser1961} J. Moser proved that, if $u$ is a non-constant solution to
\be\label{equation.global}
\mathrm{div}(A\nabla u)=0 \quad\mbox{in }\R^n,
\ee
with $A=(a_{ij})_{ij}$ satisfying in \eqref{matrix.ell} in the whole $\R^n$, and if its oscillation on ${\mathbb S}^{n-1}_r$ is increasing with respect to $r>0$, then it must grow at infinity as a specific power of $r$ depending only on $n,\lambda$ and $L$. In \cite[Theorem 1]{serrin-weinberger} the authors improved this estimate by showing the validity of the following Liouville-type property\footnote{The formulation of the Liouville-type property can be obtained with some obvious modifications of the original results of \cite[Section 2]{serrin-weinberger}. See also \cite[Theorem 5.2]{lin-ni} for a similar remark.} at infinity: every non-constant solution to \eqref{equation.global} must grow at infinity at least as fast as $|x|^{k^*}$, for some $k^*=k^*(n,\lambda,L)$ (see Definition \ref{def.optimal} for the precise notion of optimal growth).\\
Since the optimal growth $k^*$ is computed by an iteration of the Harnack's inequality, its dependence on the elliptic constant is not explicit and the sharpness of this exponent is also not known. Indeed, by the example of N.G. Meyers is known that
$$
k^*(n,\lambda,L) \leq \sqrt{\left(\frac{n-2}{2}\right)^2 +\frac{\lambda}{L}(n-1)}-\frac{n-2}{2},
$$
for every $n\geq 3$. Although the explicit dependence is not known for general matrix $A$ satisfying \eqref{matrix.ell}, several results have been obtained by prescribing the asymptotic behavior of $A$ at infinity (see \cite[Section 2]{serrin-weinberger}, \cite[Section 5]{lin-ni} and \cite{finn-gilbarg}). Later, in \cite{lin-ni} F. Lin and W. Ni addressed the two-dimensional case showing that
$$
k^*(2,\lambda,L)=\sqrt{\frac{\lambda}{L}},\quad\mbox{ if }\mbox{det}(A)\equiv \mbox{constant}.
$$
This result follows by relating the problem to the Riemann mapping theorem on a manifold with uniformly elliptic metric (see Remark \ref{lin.ni}).\\

Despite the clear relationship between the optimal growth at infinity and the Harnack's inequality, the precise interplay between the Liouville theorem and
the optimal H\"{o}lder regularity of solutions to \eqref{equation} with coefficients satisfying \eqref{matrix.ell} (in particular, the questions whether the two optimal coefficients $k^*(n,\lambda,L)$ and $\alpha^*(n,\lambda,L) $ coincide)  
has not been thoroughly investigated yet, and the present paper aims at taking a first step in this direction.

Moreover, we investigate the validity of Liouville-type theorems and their application to multiphase and free boundary problems, in which we aim to apply our results in a future project (see Remark \ref{rem.future}).

\subsection{Main results and organization of the paper.}
Throughout the paper, we will always denote with $M(\lambda,L)$ the collection of symmetric matrix with bounded, measurable coefficients satisfying the ellipticity condition
$$
\lambda \abs{\xi}^2 \leq  (A(x)\xi\cdot\xi) \leq L \abs{\xi}^2\quad\text{for $\xi \in \R^n, x \in \R^n$}.
$$
We start by addressing the two-dimensional case in which \cite{piccinini-spagnolo,lin-ni} delineated a rather complete picture of the problem. Indeed, we propose an extension of their analysis to the case of multiphase problems, in which the problem is usually stated in terms of a collection of $m$ nonnegative, continuous, subsolutions with disjoint positivity sets.\\

Thus, we start by proving the following Alt-Caffarelli-Friedman type monotonicity formula, which nowadays is the key-tool to the study of local regularity of solutions and free boundaries in multiphase problems.
\begin{theorem}\label{ACFn2}
  Let $B_1\subset \R^2, A \in M(\lambda,L)$ and $(u_i)_{i=1}^m \subset H^1(B_1)$ be a collection of $m$ nonnegative, continuous functions satisfying
  \be\label{subharmonicity}
  -\mathrm{div}(A(x)\nabla u_i)\leq 0 \quad\text{in $B_1$}.
  \ee
  Assume that $u_i$'s have disjoint positivity sets and $u_i(0)=0$ for every $i=1,\dots,m$. Then, the map  
 \be\label{acf.elliptic1}
  \Phi(r) := \prod_{i=1}^m \frac{1}{r^{2k^*_m}}\int_{B_r}{A \nabla u_i\cdot \nabla u_i \,dx},\quad\mbox{with }k^*_m := \frac{m}{2}\sqrt{\frac{\lambda}{L}},
  \ee
  is monotone non-decreasing in $r$, for $r \in (0,1)$.
\end{theorem}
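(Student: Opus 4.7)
The plan is to follow the scheme of the Alt--Caffarelli--Friedman monotonicity formula, adapted to the anisotropic setting in dimension two. Setting $I_i(r) := \int_{B_r} A \nabla u_i \cdot \nabla u_i \, dx$, one has $I_i'(r) = \int_{\partial B_r} A \nabla u_i \cdot \nabla u_i \, d\sigma$ for a.e.\ $r \in (0,1)$, so that
\[
\frac{\Phi'(r)}{\Phi(r)} = \sum_{i=1}^m \frac{I_i'(r)}{I_i(r)} - \frac{2 m k^*_m}{r}.
\]
The monotonicity thus reduces to showing $\sum_{i=1}^m r I_i'(r)/I_i(r) \geq 2 m k^*_m = m^2\sqrt{\lambda/L}$ for a.e.\ $r$ (when some $I_i \equiv 0$ the corresponding $u_i$ vanishes identically and the statement is trivial).

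The heart of the argument is a sharp per-phase estimate. Denoting by $\Gamma_i(r) := \{u_i > 0\} \cap \partial B_r$ the positivity arc and by $\ell_i(r)$ its angular measure, the claim is
\[
\frac{r I_i'(r)}{I_i(r)} \geq \frac{2\pi}{\ell_i(r)} \sqrt{\frac{\lambda}{L}} \quad \text{whenever } \ell_i(r) > 0.
\]
I would prove it in three steps. First, testing the subsolution inequality \eqref{subharmonicity} against $u_i$ in $B_r$ and integrating by parts yields the one-sided Green identity
\[
I_i(r) \leq \int_{\partial B_r} u_i \, (A \nabla u_i \cdot \nu) \, d\sigma.
\]
Second, Cauchy--Schwarz with respect to the bilinear form $A$, combined with an AM--GM splitting of the radial and tangential contributions to $I_i'(r)$ expressed in an $A$-orthogonal polar frame on $\partial B_r$, recovers the factor $2$. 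Third, a sharp Poincar\'e-type inequality on the arc $\Gamma_i(r)$, whose optimal constant corresponds --- via separation of variables $u = r^\gamma \phi(\theta)$ --- to the characteristic exponent of a homogeneous $A$-subsolution on the sector of aperture $\ell_i(r)$, produces the factor $(\pi/\ell_i(r))\sqrt{\lambda/L}$.

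Once the per-phase estimate is in hand, the closing step is purely combinatorial. Since the positivity sets of $u_1,\dots,u_m$ are disjoint, so are the arcs $\Gamma_i(r) \subset \partial B_r$ and $\sum_i \ell_i(r) \leq 2\pi$; the elementary AM--HM inequality then gives $\sum_i 1/\ell_i(r) \geq m^2/(2\pi)$, hence
\[
\sum_{i=1}^m \frac{r I_i'(r)}{I_i(r)} \geq 2\pi\sqrt{\frac{\lambda}{L}}\sum_{i=1}^m \frac{1}{\ell_i(r)} \geq m^2\sqrt{\frac{\lambda}{L}} = 2 m k^*_m,
\]
so $\Phi' \geq 0$ a.e.\ and, by absolute continuity, $\Phi$ is non-decreasing. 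The main obstacle is producing the sharp factor $\sqrt{\lambda/L}$ rather than $\lambda/L$ in the per-phase step: a blunt application of the pointwise bounds $\lambda |\nabla u|^2 \leq A\nabla u \cdot \nabla u \leq L|\nabla u|^2$ wastes a factor $\sqrt{L/\lambda}$ and fails to close the estimate against $2mk^*_m$. The fix is to choose the weights in both the Cauchy--Schwarz step and the Poincar\'e inequality so that the two ellipticity constants interpolate through the geometric mean $\sqrt{\lambda L}$, which is precisely the two-dimensional mechanism underlying the Piccinini--Spagnolo optimal exponent and is where the hypothesis $n=2$ is essentially used.
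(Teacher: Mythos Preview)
Your proposal is correct and follows essentially the same route as the paper: the same logarithmic derivative reduction, the same Green/Cauchy--Schwarz bound on $I_i(r)$, the same $A$-orthogonal radial/tangential split of $I_i'(r)$ producing the factor $2\sqrt{\lambda/L}$ (this is the paper's Lemma~\ref{tangential}), and the same Poincar\'e/eigenvalue step on $\mathbb{S}^1$. The only cosmetic difference is that you close with the explicit identity $\Lambda_1(\Gamma_i)^{1/2}\geq \pi/\ell_i$ plus AM--HM, whereas the paper phrases the same step as the optimal $m$-partition bound $\inf_{\omega\in\mathcal P^m}\sum_i\Lambda_1(\omega_i)^{1/2}=m^2/2$ and cites \cite{CTV}; these are the same computation.
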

This formula is deeply inspired by the seminal results of \cite{ACF} in which the authors deduce the optimal Lipschitz regularity for solutions to a two-phase free boundary problem. Nowadays, several generalizations of the Alt-Caffarelli-Friedman formula are available in the literature, tailored to deal with equations with regular variable coefficients and right-hand side \cite{CJK, CK,mateACF} and general free boundary problems \cite{CTV, BMPV, Almost-minACF} (see also more recent results \cite{soave2020anisotropic,robinACF} and reference therein).

Nevertheless, the validity of a monotonicity formula for operators with measurable coefficient has not been thoroughly investigated yet, and Theorem \ref{ACFn2} represents the first result in this direction.\\

Inspired by the development in \cite{CTV}, we prove the following Liouville-type result for collection of non-negative, continuous, subharmonic functions with disjoint positivity sets.
\begin{proposition}\label{lio2.introduction}
Let $A \in M(\lambda,L)$ and $(u_i)_{i=1}^m\subset H^1_\loc(\R^2)$ be a collection of $m$ nonnegative, continuous function satisfying
  \be\label{subequation.entire}
  -\mathrm{div}(A(x)\nabla u_i)\leq 0 \quad\text{in $\R^2$},
\ee
  with disjoint positivity sets and such that $u_i(0)=0$ for every $i=1,\dots,m$. Given $k^*_m \in (0,1)$ as in \eqref{acf.elliptic1}, if for some $\gamma \in (0,k^*_m)$ there exists $C>0$ such that
  $$
  u_i(x)\leq C(1+\abs{x}^\gamma)\quad\mbox{in }\R^2, \mbox{ for every }i=1,\dots,m,
  $$
  then there exists an index $j=1,\dots,m$ such that $u_j\equiv 0$.
\end{proposition}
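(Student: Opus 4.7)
The plan is to apply the monotonicity formula of Theorem \ref{ACFn2} on arbitrarily large balls and to compare the resulting bound with the prescribed growth at infinity. Since each $u_i \in H^1_{\loc}(\R^2)$ is a continuous, nonnegative subsolution on the whole plane and $A\in M(\lambda,L)$, a standard rescaling (the class $M(\lambda,L)$ is invariant under the dilation $A\mapsto A(R\,\cdot\,)$) upgrades Theorem \ref{ACFn2} to the statement that
$$
\Phi(r):=\prod_{i=1}^m \frac{1}{r^{2k^*_m}}\int_{B_r}A\nabla u_i\cdot\nabla u_i\,dx
$$
is non-decreasing on the whole half-line $(0,+\infty)$.

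The next step is a Caccioppoli-type estimate: testing the weak inequality $-\mathrm{div}(A\nabla u_i)\leq 0$ against $\eta^2 u_i$ with $\eta$ a standard radial cutoff, equal to $1$ on $B_r$, supported in $B_{2r}$ and satisfying $\abs{\nabla \eta}\leq 2/r$, and absorbing terms via Young's inequality and the ellipticity \eqref{matrix.ell}, yields
$$
\int_{B_r}A\nabla u_i\cdot\nabla u_i\,dx\leq \frac{C_0(\lambda,L)}{r^2}\int_{B_{2r}}u_i^2\,dx.
$$
Combining this with the hypothesis $u_i(x)\leq C(1+\abs{x}^\gamma)$ and the $2$-dimensional volume factor, for every large $r$ one obtains $\int_{B_r}A\nabla u_i\cdot\nabla u_i\,dx\leq C_1\, r^{2\gamma}$, and hence
$$
\Phi(r)\leq C_1^m\, r^{2m(\gamma-k^*_m)}.
$$
Since $\gamma<k^*_m$, the exponent is strictly negative and $\Phi(r)\to 0$ as $r\to +\infty$. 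The monotonicity and the nonnegativity of $\Phi$ then force $\Phi\equiv 0$ on $(0,+\infty)$.

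Finally, $\Phi(r)=0$ for every $r>0$ forces at least one factor $\int_{B_r}A\nabla u_j\cdot\nabla u_j\,dx$ to vanish; since for each fixed $j$ this integral is continuous and non-decreasing in $r$, the set $E_j:=\{r>0 : \int_{B_r}A\nabla u_j\cdot\nabla u_j\,dx=0\}$ is (if nonempty) of the form $(0,R_j]$ with $R_j\in(0,+\infty]$. As these finitely many sets cover $(0,+\infty)$, at least one index $j$ satisfies $R_j=+\infty$; ellipticity, the continuity of $u_j$ and $u_j(0)=0$ then give $u_j\equiv 0$. The argument presents no substantial difficulty beyond the monotonicity formula: Theorem \ref{ACFn2} is the heart of the matter, and its exponent $2k^*_m$ is precisely tuned to the multiphase threshold growth that Proposition \ref{lio2.introduction} probes.
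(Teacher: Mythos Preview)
Your proof is correct and follows essentially the same approach as the paper: monotonicity of $\Phi$ combined with the Caccioppoli inequality and the growth hypothesis forces $\Phi(r)\le C r^{2m(\gamma-k^*_m)}\to 0$. The paper phrases the conclusion as a direct contradiction (assume all $u_i\not\equiv 0$, pick $r_0$ with $\Phi(r_0)>0$, then $0<\Phi(r_0)\le\Phi(r)\to 0$), which is slightly quicker than your covering argument with the sets $E_j$, but the content is identical; your explicit remark that rescaling extends Theorem~\ref{ACFn2} to all of $(0,+\infty)$ is a point the paper uses implicitly.
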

As an immediate consequence, in the case $m=2$, we deduce the growth estimate
$$
k^*(2,\lambda,L)=\sqrt{\frac{\lambda}{L}},\quad\mbox{ for every }A \in M(\lambda,L),
$$
for entire solutions to \eqref{equation}, which completes the analysis carried out in \cite{lin-ni} in the two-dimensional case (see Corollary \ref{lio2.corollary} and Remark \ref{lin.ni}). Moreover, by Remark \ref{counter} we know that Proposition \ref{lio2.introduction} is optimal in the class of matrices $M(\lambda,L)$.
\begin{remark}\label{rem.future}
Regarding the case of free boundary problems involving elliptic operator with measurable coefficient in $M(\lambda,L)$, the only known results require some further regularity assumptions of the results (see \cite{CJK,mateACF}). In the two-dimensional case, Theorem \ref{ACFn2} allows to extend the regularity theory to those variational problems associated to
$$
J(u,B_1)=\int_{B_1}(A\nabla u\cdot \nabla u)\,dx + Q_+ |\{u^+>0\}\cap B_1|
+ Q_- |\{u^-> 0\}\cap B_1| ,
$$
with $A \in M(\lambda,L), Q_+>0, Q_-\geq 0$ and $u^+=\max\{u,0\},u^-=\max\{-u,0\}$. This shall be the subject of a future project, focused on the one-phase and the two-phase problem for general matrix $A\in M(\lambda,L)$.
\end{remark}
One of the primary obstacles to extend those results to the higher-dimensional case is the construction of a monotonicity formula both for \eqref{equation} and for multiphase problems (see Section \ref{section.n} for more details on the known results). Indeed, already in \cite[Section 4]{piccinini-spagnolo} the authors assert that every solution to \eqref{equation} satisfies
$$
\int_{B_r}(A\nabla u\cdot \nabla u)\,dx\leq \left(\frac{r}{R}\right)^{\mu}\int_{B_R}(A\nabla u\cdot \nabla u)\,dx\quad\mbox{for }0<r\leq R<1,
$$
for some positive constant $\mu\leq n-2$. Nevertheless, by Morrey's embedding, this estimate does not imply the H\"{o}lder continuity of solutions of \eqref{equation}, as it happens in the two-dimensional case (see Section \ref{section.n} and Lemma \ref{lem.finn-gilbar} for more details). Inspired by the classical formulation of the Alt-Caffarelli-Friedman formula, we consider a weighted Dirichlet-type integral obtained by considering the fundamental solution $\Gamma$ associated to \eqref{equation}.\\

The first result is a decay estimate of the mentioned energy, which generalizes to higher dimensions $n\geq 3$ the well-known proof of the H\"{o}lder continuity for solutions to \eqref{equation} in $\R^2$ based on the K.O. Widman’s hole-filling technique \cite{widman} and the Caccioppoli's inequality (see Remark \ref{consid1}).
\begin{theorem}\label{new.way.intro}
  Let $A \in M(\lambda,L)$ and $u\in H^1(B_1)$ be a solution of \eqref{equation}. Thus, there exists $\alpha= \alpha(n,\lambda,L) \in (0,1)$ such that
  \be\label{acf.elliptic.partial.intro}
  \int_{B_r}{ \Gamma(x) ( A \nabla u \cdot \nabla u) \,dx}\leq C(n,\lambda,L)\left(\frac{r}{R}\right)^{2\alpha} \int_{B_R} {\Gamma(x) (A\nabla u \cdot \nabla u) \,dx},
  \ee
  for $0<r<R\leq 1/2$. Therefore, $u \in C^{0,\alpha}(B_{1/2})$.
\end{theorem}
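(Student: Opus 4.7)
The overall approach is to adapt K.O.~Widman's hole-filling strategy to the weighted Dirichlet functional
$$
I(r) := \int_{B_r} \Gamma(x)\, A\nabla u \cdot \nabla u\, dx,
$$
where $\Gamma$ is the Green function of $\mathrm{div}(A\nabla\,\cdot\,)$ with pole at the origin. The central goal is to prove a one-step decay $I(r) \le \theta\, I(2r)$ with some $\theta = \theta(n,\lambda,L) \in (0,1)$, which iterates to \eqref{acf.elliptic.partial.intro} with $2\alpha = \log_2(1/\theta)$. To set up the tools, I will recall the Littman--Stampacchia--Weinberger estimates: $\Gamma$ is continuous on $\R^n\setminus\{0\}$, satisfies $-\mathrm{div}(A\nabla\Gamma) = \delta_0$ distributionally, and obeys $c|x|^{2-n} \le \Gamma(x) \le C|x|^{2-n}$ with $c,C$ depending only on $n,\lambda,L$. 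Since $u$ is continuous by De~Giorgi--Nash--Moser, the shift $v := u - u(0)$ is well defined and vanishes at the origin.

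The core of the argument is a combined weighted Caccioppoli--Poincar\'e inequality of the form $I(r) \le C\,(I(2r) - I(r))$. For the Caccioppoli piece, I test $\mathrm{div}(A\nabla u) = 0$ against the function $\phi = v\,\eta^2\,\Gamma$, where $\eta$ is a standard radial cutoff with $\eta\equiv 1$ on $B_r$, $\eta\equiv 0$ outside $B_{2r}$, and $|\nabla\eta|\le C/r$. Since $\phi \notin H^1_0$ near the pole of $\Gamma$, the test is first performed with a truncated weight $\Gamma_\varepsilon := \min(\Gamma,\varepsilon^{-1})$ and then the limit $\varepsilon\to 0$ is taken. Among the three resulting terms, the one involving $\nabla\Gamma$ is reshaped using $2v A\nabla v = A\nabla v^2$ together with the distributional identity $-\mathrm{div}(A\nabla\Gamma) = \delta_0$; this produces the boundary mass $\tfrac{1}{2}v^2(0)$, which vanishes precisely because $v(0) = 0$. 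Absorbing the cross-term via Cauchy--Schwarz on the weighted measure $\Gamma\,dx$ yields
$$
\int_{B_r} \Gamma\, A\nabla v \cdot \nabla v \;\le\; \frac{C}{r^2}\int_{B_{2r}\setminus B_r} \Gamma\, v^2\, dx.
$$
A weighted Poincar\'e--Hardy inequality on the dyadic annulus, valid because $\Gamma \asymp r^{2-n}$ is essentially constant there (up to a factor depending only on $n,\lambda,L$) and the anchoring $v(0) = 0$ controls the missing boundary term, then dominates the right-hand side by $C \int_{B_{2r}\setminus B_r} \Gamma\, A\nabla v \cdot \nabla v = C\,(I(2r) - I(r))$. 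Hole-filling (adding $C\cdot I(r)$ to both sides) now gives the geometric decay step.

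To conclude H\"older regularity, I will rerun the same argument with $\Gamma$ replaced by $\Gamma_{x_0}$, the Green function of $\mathrm{div}(A\nabla\,\cdot\,)$ with pole at an arbitrary $x_0 \in B_{1/2}$; this is legitimate because the equation is invariant under the shift. The decay inherited uniformly in $x_0$, together with the lower bound $\Gamma_{x_0}(x) \ge c |x-x_0|^{2-n}$, yields
$$
\int_{B_r(x_0)} |\nabla u|^2\, dx \;\le\; C\, r^{\,n-2+2\alpha},
$$
so $u \in C^{0,\alpha}(B_{1/2})$ by Morrey's embedding (equivalently, Campanato's characterization). The principal technical obstacle is the weighted Caccioppoli step: with merely measurable coefficients one has no pointwise estimate on $\nabla \Gamma$, so the term involving $A\nabla\Gamma$ cannot be handled by brute force and the cancellation furnished by $-\mathrm{div}(A\nabla\Gamma) = \delta_0$ is essential. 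Making this cancellation rigorous requires the regularization $\Gamma_\varepsilon$ and a careful passage to the limit, and it is where the use of $v(0) = 0$ is strictly needed.
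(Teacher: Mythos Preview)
Your hole-filling outline has the right shape, but the weighted Caccioppoli step contains a genuine gap. When you test $\mathrm{div}(A\nabla u)=0$ against $v\eta^2\Gamma$ and integrate by parts in the $\nabla\Gamma$ term, you do \emph{not} obtain only the point mass $\tfrac12 v^2(0)$: because the cutoff $\eta^2$ is not constant, one has
\[
\int \eta^2\, A\nabla(v^2)\cdot\nabla\Gamma \;=\; \int A\nabla(\eta^2 v^2)\cdot\nabla\Gamma \;-\; \int v^2\, A\nabla(\eta^2)\cdot\nabla\Gamma,
\]
and the second piece leaves the residual
\[
\int_{B_{2r}\setminus B_r} v^2\,\eta\,(A\nabla\eta\cdot\nabla\Gamma)\,dx
\]
sitting on the annulus. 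For $A$ with merely measurable coefficients there is no pointwise estimate of the form $|\nabla\Gamma|\le C\,\Gamma/|x|$, so this term cannot be absorbed into $\tfrac{C}{r^2}\int\Gamma v^2$, and your displayed weighted Caccioppoli inequality does not follow. This is precisely the term the paper isolates in \eqref{dis.estimate}, and controlling it is the heart of the argument: the paper combines an $L^2$ Caccioppoli bound for $\nabla\Gamma$ on the annulus (Lemma~\ref{fundamental.lemma}) with Moser's subsolution iteration (Proposition~\ref{moser.prop}), which upgrades $\|v\|_{L^2}$ to $\|v\|_{L^4}$ on annuli, and then applies Cauchy--Schwarz. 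That pair of ingredients is what your sketch is missing.

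There is a second gap in the Poincar\'e--Hardy step. The inequality $r^{-2}\int_{B_{2r}\setminus B_r} v^2 \le C\int_{B_{2r}\setminus B_r}|\nabla v|^2$ is false for $v=u-u(0)$: the ``anchoring'' $v(0)=0$ occurs at the center, which does not lie in the annulus and therefore cannot supply the mean-zero or boundary condition that a Poincar\'e inequality on $B_{2r}\setminus B_r$ requires (a function equal to a nonzero constant on the annulus but vanishing at the origin gives an immediate counterexample). The paper works instead with $v=u-u_{B_r}$ and the ordinary Poincar\'e inequality. As a final remark, your use of the point value $u(0)$---and hence of De~Giorgi--Nash--Moser continuity---runs against the paper's explicit aim (see the remark following the theorem) of making the argument independent of those results; subtracting the mean rather than the point value avoids this circularity.
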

\begin{remark}
We emphasize that Theorem \ref{new.way.intro} is independent of Harnack's inequality as established in \cite{degiorgi, nash, moser1961}. Indeed, our result is deeply based on the estimate
\be\label{funda.intro}
C_1(n,\lambda,L)|x|^{2-n}\leq \Gamma(x) \leq C_2(n,\lambda,L)|x|^{2-n} \quad\mbox{in }\R^n,
\ee
which was directly proved by E.B. Fabes and D.W. Stroock in \cite{fabes-stroock}, without using either the validity of an Harnack inequality or the H\"{o}lder regularity of solutions to \eqref{equation} (see Section \ref{section.prel} for a detailed discussion on the role of the fundamental solution).
\end{remark}
It is clear that the H\"{o}lder exponent in Theorem \ref{new.way.intro} is far from being optimal in the class $M(\lambda,L)$. Thereby, we started a finer analysis of the optimal regularity in $M(\lambda,L)$ by showing the existence of a formal link between the optimal regularity and the optimal growth at infinity.
\begin{theorem}\label{liovu.intro}
  Let $A \in M(\lambda,L)$ and $u \in H^1(B_1)$ be a weak-solution to
$$
  -\mathrm{div}(A(x)\nabla u)= 0 \quad\text{in $B_1$},
$$
satisfying $\lVert u\rVert_{L^\infty(B_1)}\leq 1$.
Then $u\in C^{0,\alpha}(\overline{B_{1/2}})$, for every $\alpha \in (0,k^*(n,\lambda,L))$.
\end{theorem}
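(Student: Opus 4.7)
The plan is a contradiction argument that couples a Campanato-style blow-up with Spagnolo's $G$-compactness of the class $M(\lambda,L)$. Fix $\alpha_0 \in (0, k^*(n,\lambda,L))$ and suppose the conclusion fails at this exponent: there exist matrices $A_k \in M(\lambda,L)$ and weak solutions $u_k \in H^1(B_1)$ of $-\mathrm{div}(A_k \nabla u_k) = 0$ with $\|u_k\|_{L^\infty(B_1)} \leq 1$ whose $C^{0,\alpha_0}$-seminorm on $\overline{B_{1/2}}$ diverges. The target is to produce a non-constant entire solution, for some matrix in $M(\lambda,L)$, whose growth at infinity is controlled by $|y|^{\alpha_0}$ with $\alpha_0 < k^*$, in direct violation of the Liouville-type property defining $k^*$.

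The first step is a worst-scale selection. Set $N_k(r,x) := \sup_{y \in B_r(x)} |u_k(y) - u_k(x)|$ and
$$
S_k := \sup\bigl\{\, r^{-\alpha_0} N_k(r,x) \;:\; x \in \overline{B_{1/2}},\ 0 < r \leq 1/4 \,\bigr\}.
$$
By assumption $S_k \to \infty$, and the uniform bound $\|u_k\|_\infty \le 1$ forces the extremal radii to vanish. Pick $x_k \in \overline{B_{1/2}}$ and $r_k \to 0^+$ with $r_k^{-\alpha_0} N_k(r_k, x_k) \geq S_k/2$ and define
$$
v_k(y) := \frac{u_k(x_k + r_k y) - u_k(x_k)}{\Lambda_k}, \qquad \Lambda_k := N_k(r_k, x_k), \qquad A_k^{(r_k)}(y) := A_k(x_k + r_k y).
$$
Then $v_k(0) = 0$, $\sup_{B_1} |v_k| = 1$, and $v_k$ solves $-\mathrm{div}(A_k^{(r_k)} \nabla v_k) = 0$ on a ball $B_{R_k}$ with $R_k \to \infty$. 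The choice of $(x_k, r_k)$ yields the key growth bound
$$
\sup_{B_\rho} |v_k| \leq 2 \rho^{\alpha_0} \qquad \text{for every } 1 \leq \rho \leq 1/(4 r_k),
$$
because a larger ratio at scale $r_k \rho$ would violate the defining supremum for $S_k$.

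I now pass to the limit. The De Giorgi–Nash theorem (Proposition \ref{moser.prop}) together with Caccioppoli's inequality gives uniform $C^{0,\alpha^*}_\loc$ and $H^1_\loc$ bounds for $(v_k)$, so up to a subsequence $v_k \to v_\infty$ locally uniformly in $\R^n$ and weakly in $H^1_\loc$. By Spagnolo's $G$-compactness theorem applied on each $B_R$ and a standard diagonal extraction, $A_k^{(r_k)}$ $G$-converges on every ball to some $A_\infty \in M(\lambda,L)$. The defining property of $G$-convergence transfers the equation to the limit, giving $-\mathrm{div}(A_\infty \nabla v_\infty) = 0$ in $\R^n$; passing the growth estimate to the limit yields $|v_\infty(y)| \leq C(1 + |y|^{\alpha_0})$ in $\R^n$, while $\sup_{B_1} |v_\infty| = 1$ certifies non-triviality. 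Since $\alpha_0 < k^*(n,\lambda,L)$ and $A_\infty \in M(\lambda,L)$, the Liouville property defining $k^*$ forces $v_\infty$ to be constant; combined with $v_\infty(0) = 0$ this gives $v_\infty \equiv 0$, contradicting $\sup_{B_1} |v_\infty| = 1$.

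The main obstacle, in my view, is the blow-up bookkeeping: one must simultaneously achieve normalization (so the limit is non-trivial), a uniform $|y|^{\alpha_0}$ growth bound on domains expanding to all of $\R^n$ (so Liouville applies), and local uniform convergence of the $v_k$ paired with $G$-convergence of the matrices to a matrix in the same ellipticity class. Spagnolo's compactness of $M(\lambda,L)$, which preserves the ellipticity bounds exactly, is the decisive input that places the limiting entire problem within the hypotheses of the Liouville principle encoded by $k^*$; without this, the limit matrix $A_\infty$ could escape the class $M(\lambda,L)$ and the characterization of $k^*$ would no longer apply.
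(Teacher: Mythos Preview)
Your proposal is correct and follows the same contradiction/blow-up/$G$-compactness/Liouville strategy as the paper's proof. The only notable difference is in implementation: you select and normalize the blow-up via the $L^\infty$-oscillation $N_k(r,x)$ and appeal directly to De~Giorgi--Nash for local compactness, whereas the paper first regularizes the coefficients, normalizes through the scaled Dirichlet quantity $D(x_0,u,r)=r^{-(n-2+2\alpha)}\int_{B_r(x_0)}|\nabla u|^2$ with a distance-to-boundary weight $(R-|x_0|)^{n-2+2\alpha}$, and extracts the uniform H\"older bound via Campanato's embedding from the rescaled control on $D$.
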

This result is obtained by a contradiction argument in which we construct a blow-up sequence of solutions to \eqref{equation} associated to some $\overline{A}_j \in  M(\lambda,L)$ (see \eqref{blowup} for the precise definitions). Unfortunately, the presence of matrices in $M(\lambda,L)$ does not allow to prove the compactness of weak solutions in the $H^1$-strong topology and explicitly construct the equation satisfied by the blow-up limit.\\
Hence, we overcome this obstacle by applying the notion of $G$-convergence introduced in \cite{spagnolo1,spagnolo2,marino-spagnolo,spagnolo-survey} to our blow-up analysis, proving $G$-compactness of the problems and the existence of a blow-up limit satisfying \eqref{equation} for some matrix in $M(\lambda,L)$.
\begin{remark}\label{consid2}
Theorem \ref{new.way.intro} and Theorem \ref{liovu.intro} suggest that in higher dimensions the optimal regularity and the least growth at infinity could be deduced by the existence of $\alpha^* \in (0,1)$ for which the map
$$
  \Phi(r) := \frac{1}{r^{2\alpha^*}}\int_{B_r}{\Gamma(x)(A \nabla u\cdot \nabla u) \,dx}
$$
is monotone non-decreasing for $r\in (0,1)$. Indeed, by combining the monotonicity result with the Poincar\'{e} and Caccioppoli's inequalities, one could immediately derive that
$$
  \sup_{x_0 \in B_{1/2}}\sup_{r \leq 1/2} \frac{1}{r^{n+2\alpha^*}}  \int_{B_r(x_0)}{(u-u_{B_r(x_0)})^2 \,dx} \leq C_n \left(\frac{C_2}{C_1}\right)^{\frac{n-2}{2}} \norm{u}{L^2(B_1)}^2,
$$
with $C_1,C_2>0$ defined by \eqref{funda.intro}. Moreover, in view of Theorem \ref{spagno1} and the uniqueness of the solution of the Dirichlet problem associated to \eqref{equation}, it is sufficient to prove the monotonicity result for a family of matrices $G$-dense in $M(\lambda,L)$ (see Section \ref{section.prel} for more details on the $G$-convergence).
\end{remark}
Our analysis implies that determining the asymptotic behavior of the fundamental solutions near the singularity (resp. at infinity) is the key point to deducing the optimal regularity (resp. the optimal growth at infinity) in $M(\lambda,L)$. Indeed, even though in \cite{kozlov} S.M. Kozlov developed an asymptotic analysis of the fundamental solution near the origin by means of the concept of $G$-convergence, his analysis requires a priori the uniqueness of the blow-up limit of the operator $A \in M(\lambda,L)$, which is not always true in our setting (see also \cite{avella1,avella2} for similar results in the homogenization theory).\\

Therefore, we conclude the study by showing an application of our analysis under strong assumptions on the matrix $A\in M(\lambda,L)$ (see \eqref{G-limit}). Indeed, by using the concept of blow-down sequences, we show in Proposition \ref{liouv.Finn} that the optimal Liouville exponent $k^*_A$ of a fixed matrix (see Definition \ref{def.optimal}) can be obtained in terms of its blow-down limit at infinity (see also \cite[Theorem 2]{moser-struwe} for more results in this directions). In view of that, we improve a known estimate by \cite[Section 2]{serrin-weinberger} by showing that if a matrix $A$ converges to some constant matrix at infinity, then $k^*_A=1$.\\

The paper is organized as follows: in Section \ref{section.prel} we recall some preliminary results on the notions of $G$-convergence and fundamental solution by stressing the connection with the validity of the Harnack's inequality.
In Section \ref{section.2} we address the two-dimensional case by proving Theorem \ref{ACFn2} and Proposition \ref{lio2.introduction}, from which Corollary \ref{lio2.corollary} follows by simple arguments. Finally, in Section \ref{section.n} we discuss the higher-dimensional case by discussing the interplay between optimal regularity and optimal growth at infinity in the proof of Theorem \ref{new.way.intro}. In the end, in Proposition \ref{liouv.Finn} we provide an application of our asymptotic analysis to the problem of the optimal growth at infinity for a fixed matrix in $M(\lambda,L)$.

\section{Preliminary results}\label{section.prel}
In this Section we start by introducing the concept of the  $G$-convergence and recalling some well-known result for the fundamental solution of uniformly elliptic divergence operator with bounded coefficients. In the end, we show some technical results that will be used through the paper.
\subsection{\bf{$\mathbf{G}$-convergence.}} The theory of the $G$-convergence has been the subject of a systematic study by S. Spagnolo et al. in the series of papers \cite{spagnolo1,spagnolo2,marino-spagnolo} (see also the survey \cite{spagnolo-survey} and reference therein for an extensive discussion) in which they introduce an abstract scheme for the convergence in energy of elliptic and parabolic operators. We recall now its definition and some results.\\
The following definition (see \cite[Theorem 4]{spagnolo1}) refers to the $G$-convergence of elliptic operators.
\begin{definition}
 Let $A_k \in M(\lambda,L)$, we say that $A_k$ is $G$-convergent to some $A \in M(\lambda,L)$ in $\Omega\subset \R^n$ if and only if, for some $t>0$, we have
 $$
 (\mathrm{div}(A_k\nabla)+t)^{-1}f \to
  (\mathrm{div}(A\nabla)+t)^{-1}f \quad\mbox{strongly in }L^2(\Omega),
 $$
 for every $f \in H^{-1}(\Omega)$. Alternatively, if $\Omega$ is bounded, the $G$-convergence is equivalent to
 $$
 (\mathrm{div}(A_k\nabla))^{-1}f \to
  (\mathrm{div}(A\nabla))^{-1}f \quad\mbox{strongly in }L^2(\Omega),
 $$
 for every $f \in H^{-1}(\Omega)$.
\end{definition}
For the sake of simplicity, through the paper we will also use the notation $\xrightarrow{\text{ $G$ }}$ to denote the $G$-convergence of a sequence of matrices in $M(\lambda,L)$. As pointed out in \cite[Section 5]{spagnolo1}, the $G$-convergence is highly suited to the study of local solutions of homogeneous divergence operator associated to some $A\in M(\lambda,L)$.
\begin{theorem}\label{spagno1}
  Let $A_k\in M(\lambda,L), u_k \in  H^1_\loc(\Omega), u\in H^1_\loc(\Omega)$ and $\tilde{\Omega}\subset \Omega$ be open. Suppose that:
  \begin{itemize}
    \item $A_k$ is $G$-convergent to $A$ in $\tilde{\Omega}$;
    \item for every $k>0$, the function $u_k$ is a weak solution of
        $$
        -\mathrm{div}(A_k\nabla u_k)=0 \quad\mbox{in }\tilde{\Omega};
        $$
    \item for every compact set $K\subset\subset \tilde{\Omega}$ we have
    $$
    \lim_{k\to \infty}\int_{K}(u_k -u)^2\,dx = 0.
    $$
  \end{itemize}
  Thus, $\mathrm{div}(A\nabla u)=0$ in $\tilde{\Omega}$ in the weak sense.
\end{theorem}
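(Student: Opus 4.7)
The plan is to pass to the limit in the weak formulation of the equations for $u_k$; the only non-trivial point is identifying the weak $L^2_\loc$-limit of the flux $A_k \nabla u_k$, for once $A_k \nabla u_k \rightharpoonup A \nabla u$ weakly in $L^2_\loc(\tilde{\Omega})$, testing $-\mathrm{div}(A_k \nabla u_k) = 0$ against $\varphi \in C_c^\infty(\tilde{\Omega})$ and sending $k \to \infty$ immediately yields the conclusion.

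First I would establish uniform $H^1_\loc$-bounds on $\{u_k\}$ via the standard Caccioppoli inequality: testing with $u_k \eta^2$ for $\eta \in C_c^\infty(\tilde{\Omega})$ and using ellipticity of $A_k \in M(\lambda,L)$ gives
$$
\lambda \int \eta^2 |\nabla u_k|^2 \,dx \leq C(\lambda,L) \int u_k^2 |\nabla \eta|^2 \,dx,
$$
whose right-hand side is controlled because $u_k \to u$ in $L^2_\loc(\tilde{\Omega})$. Up to a subsequence this produces $\nabla u_k \rightharpoonup \nabla u$ and $A_k \nabla u_k \rightharpoonup \sigma$ weakly in $L^2_\loc(\tilde{\Omega})$ for some $\sigma$, with $\mathrm{div}(\sigma) = 0$ distributionally obtained by passing to the limit in the equation.

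The main obstacle is then to prove $\sigma = A \nabla u$, which is blocked by the fact that $\sigma$ is a priori a product of two only weakly convergent sequences. To unlock this, I would apply the Murat-Tartar method of oscillating test functions. For each $\xi \in \R^n$ and each ball $B \subset\subset \tilde{\Omega}$, define $w_k^\xi \in H^1(B)$ as the unique weak solution of
$$
-\mathrm{div}(A_k \nabla w_k^\xi) = -\mathrm{div}(A \xi) \quad\mbox{in } B, \qquad w_k^\xi = \xi \cdot x \mbox{ on } \partial B.
$$
The $G$-convergence hypothesis, applied on the bounded domain $B$ together with uniqueness of the solution $w^\xi = \xi\cdot x$ of the limit problem, yields $w_k^\xi \to \xi\cdot x$ strongly in $L^2(B)$ and weakly in $H^1(B)$, and moreover $A_k \nabla w_k^\xi \rightharpoonup A\xi$ weakly in $L^2(B)$; the latter is one of the equivalent formulations of $G$-convergence discussed in \cite{spagnolo-survey}.

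To conclude, I would compare the two products $A_k \nabla u_k \cdot \nabla w_k^\xi$ and $A_k \nabla w_k^\xi \cdot \nabla u_k$, which coincide pointwise by the symmetry of $A_k$. The div-curl lemma applied to the first pair — $A_k \nabla u_k$ is divergence-free and $\nabla w_k^\xi$ is curl-free — shows it converges in $\mathcal{D}'(B)$ to $\sigma \cdot \xi$; applied to the second pair — $\mathrm{div}(A_k \nabla w_k^\xi) \equiv \mathrm{div}(A\xi)$ is $k$-independent hence strongly compact in $H^{-1}_\loc(B)$, while $\nabla u_k$ is curl-free — it converges to $A\xi \cdot \nabla u$. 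Equating the two limits and using the symmetry of $A$ gives $\sigma \cdot \xi = \xi \cdot A \nabla u$ for every $\xi \in \R^n$, hence $\sigma = A \nabla u$ in $B$. Since $B$ was arbitrary, this identifies $\sigma = A \nabla u$ in all of $\tilde{\Omega}$ and closes the proof.
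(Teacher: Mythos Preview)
The paper does not prove this theorem: it is stated in Section~\ref{section.prel} as a result due to Spagnolo, with a pointer to \cite[Section~5]{spagnolo1} and no argument given. So there is no in-paper proof to compare your proposal against.

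Your argument is the standard Murat--Tartar compensated-compactness route, and it is correct. The one point worth flagging is that the two facts you invoke about the correctors --- that $w_k^\xi \to \xi\cdot x$ in $L^2(B)$ and, more importantly, that the fluxes satisfy $A_k\nabla w_k^\xi \rightharpoonup A\xi$ weakly in $L^2(B)$ --- are not immediate from the bare definition of $G$-convergence recorded in the paper (strong $L^2$-convergence of solutions of the homogeneous Dirichlet problem). With your choice of corrector, the reduction to a homogeneous Dirichlet problem produces a $k$-dependent right-hand side $\mathrm{div}((A_k-A)\xi)$, so one cannot simply apply the definition; what you are really using is the flux-convergence characterization, i.e., the equivalence of Spagnolo's $G$-convergence with Tartar's $H$-convergence in the symmetric case. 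You are right to cite this from \cite{spagnolo-survey} rather than re-derive it, since that derivation uses essentially the same div--curl mechanism you then apply to $u_k$. Once granted, your two div--curl pairings and the symmetry of $A_k$ identify $\sigma = A\nabla u$ cleanly.

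For context, Spagnolo's original argument predates the div--curl lemma and proceeds through energy-convergence methods specific to the symmetric setting; yours is the modern approach and has the advantage of extending verbatim to non-symmetric $H$-convergence.
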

Even though the convergence of matrices $(A_k)_k$ in $L^1_\loc$ implies the $G$-convergence, the latter is not equivalent to any convergence of the corresponding coefficients (see the approximation result in \cite{marino-spagnolo}). Finally, since by \cite[Proposition 3]{spagnolo1} every sequence in $M(\lambda,L)$ admits a $G$-convergent subsequence, Theorem \ref{spagno1} implies the following result.
\begin{proposition}{\cite[Corollary 1]{spagnolo1}}\label{closed}
  The set of all local solution
  $$
  S(\lambda,L)=\left\{u \in H^1_\loc(\Omega)\colon \mathrm{div}(A\nabla u)=0 \mbox{ in }\Omega,\mbox{ with }A \in M(\lambda,L)\right\}
  $$
  is closed in $L^2_\loc(\Omega)$.
\end{proposition}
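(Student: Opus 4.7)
The plan is to combine the $G$-compactness of $M(\lambda,L)$ with the transfer-of-PDE statement in Theorem \ref{spagno1}. Let $(u_k)\subset S(\lambda,L)$ with $u_k\to u$ in $L^2_\loc(\Omega)$, and for each $k$ choose $A_k\in M(\lambda,L)$ with $\mathrm{div}(A_k\nabla u_k)=0$ in $\Omega$. Since $M(\lambda,L)$ is $G$-compact by \cite[Proposition 3]{spagnolo1}, there exists a subsequence (not relabelled) such that $A_k\xrightarrow{\text{ $G$ }}A$ for some $A\in M(\lambda,L)$ in $\Omega$.

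The subtle point is that to apply Theorem \ref{spagno1} I need $u\in H^1_\loc(\Omega)$, whereas the assumed convergence is merely in $L^2_\loc$. To upgrade it, I would use the standard Caccioppoli inequality for solutions of $\mathrm{div}(A_k\nabla u_k)=0$ with $A_k\in M(\lambda,L)$: for every pair $B_r\subset\subset B_R\subset\subset\Omega$,
\[
\int_{B_r} \abs{\nabla u_k}^2\,dx \;\leq\; \frac{C(\lambda,L)}{(R-r)^2}\int_{B_R} (u_k-c)^2\,dx,
\]
for any constant $c$. Since $u_k\to u$ in $L^2_\loc(\Omega)$, the right-hand side is uniformly bounded on every compact set. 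Hence $(u_k)$ is bounded in $H^1_\loc(\Omega)$; up to a further subsequence it converges weakly in $H^1_\loc(\Omega)$, and the weak limit must coincide with the strong $L^2_\loc$-limit $u$. In particular $u\in H^1_\loc(\Omega)$.

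At this stage all three hypotheses of Theorem \ref{spagno1} are satisfied on any open $\tilde\Omega\subset\subset\Omega$: the $G$-convergence $A_k\xrightarrow{\text{ $G$ }}A$, the equation $\mathrm{div}(A_k\nabla u_k)=0$ in $\tilde\Omega$, and the $L^2$-convergence $u_k\to u$ on compact subsets of $\tilde\Omega$. The theorem therefore yields $\mathrm{div}(A\nabla u)=0$ weakly in $\tilde\Omega$ for every such $\tilde\Omega$, and by exhausting $\Omega$ we conclude $\mathrm{div}(A\nabla u)=0$ weakly in $\Omega$ with $A\in M(\lambda,L)$, i.e.\ $u\in S(\lambda,L)$.

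The only genuine obstacle is the regularity upgrade from $L^2_\loc$ to $H^1_\loc$; this is entirely routine via Caccioppoli, but must be recorded because the definition of $S(\lambda,L)$ demands $H^1_\loc$ membership and Theorem \ref{spagno1} requires the candidate limit to already lie there. Everything else is an immediate invocation of the two quoted results from \cite{spagnolo1}.
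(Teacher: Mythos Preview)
Your proof is correct and follows precisely the route the paper indicates: the paper does not give a detailed proof but simply remarks, just before stating the proposition, that it follows by combining the $G$-compactness of $M(\lambda,L)$ from \cite[Proposition 3]{spagnolo1} with Theorem \ref{spagno1}. Your write-up supplies the missing details, in particular the Caccioppoli step needed to ensure $u\in H^1_\loc(\Omega)$, which the paper's one-line justification leaves implicit.
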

\subsection{\bf{Fundamental solution.}} In the pioneering work \cite{LSW} W. Littman, G. Stampacchia and H.F. Weinberger 
proved that for $n\geq 3$, every solution to \eqref{equation} approaching plus infinity at the origin is comparable with the fundamental solution of the Laplacian $|x|^{2-n}$, and that a fundamental solution of a divergence form operator associated to $A\in M(\lambda,L)$ satisfying
\be\label{estimate.fundamental}
C_1(n,\lambda,L)|x|^{2-n}\leq \Gamma(x) \leq C_2(n,\lambda,L)|x|^{2-n} \quad\mbox{in }\R^n
\ee
exists (a similar result for parabolic equation was first proved by D.G. Aronson in \cite{aronson}).\\
Starting from this work, in \cite{gruter-widman,kozlov,serrin-weinberger} the authors refined the asymptotic analysis of the fundamental solutions of divergence elliptic equations and they addressed a similar analysis for the Green function on bounded domains. In contrast with the other works, in \cite{kozlov} S.M. Kozlov considered the asymptotic behavior of the fundamental solution near the origin by means of the concept of $G$-convergence (the asymptotic at infinity is obtained only for operators with smooth, quasiperiodic coefficients).\\
We notice that all the results in \cite{gruter-widman,kozlov,serrin-weinberger} are based on the validity of the estimate \eqref{estimate.fundamental} of \cite{LSW}, which relies on the validity of the Harnack's inequality of solutions of \eqref{equation} (see the classical works \cite{degiorgi, nash, moser1960}).\\

Nevertheless, years later in \cite{fabes-stroock} E.B. Fabes and D.W. Stroock, by modifying and pursuing the ideas of J. Nash in \cite{nash}, directly established \eqref{estimate.fundamental} without using either the validity of an Harnack inequality or the H\"{o}lder regularity for solutions of \eqref{equation}. 
\begin{proposition}{\cite{fabes-stroock}}\label{estimate.fund.prop}
Let $A \in M(\lambda,L)$ and $\Gamma$ be the fundamental solution associated to $A$ in $\R^n$. Then
$$
C_1|x|^{2-n}\leq \Gamma(x) \leq C_2|x|^{2-n} \quad\mbox{in }\R^n,
$$
for some constant $C_1,C_2>0$ depending only on $n,\lambda$ and $L$.
\end{proposition}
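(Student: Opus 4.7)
The plan is to follow the approach of Fabes and Stroock, which adapts to the elliptic setting the parabolic moment method originally devised by Nash. The key idea is to realise $\Gamma$ as the time integral of the parabolic heat kernel associated to $A$, and then transfer two-sided Gaussian bounds on the kernel into pointwise bounds on $\Gamma$.

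First, I would construct the parabolic heat kernel $p(t,x,y)$ associated to the time-independent operator $\partial_t - \mathrm{div}(A\nabla \cdot)$ through standard Dirichlet-form semigroup theory: since $A \in M(\lambda,L)$, the associated form generates a strongly continuous symmetric Markovian semigroup on $L^2(\R^n)$, whose integral kernel $p$ is positive, symmetric in $(x,y)$, and satisfies the Chapman--Kolmogorov identity. A direct computation then gives the representation
\be\label{timeint.pl}
\Gamma(x) = \int_0^\infty p(t,x,0)\,dt,
\ee
which is rigorous as soon as $p$ decays sufficiently fast in $t$, as is the case for $n \geq 3$.

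The heart of the argument is the two-sided Gaussian bound
\be\label{gauss.pl}
\frac{c_1}{t^{n/2}} \exp\!\left(-\frac{c_2|x-y|^2}{t}\right) \leq p(t,x,y) \leq \frac{c_3}{t^{n/2}} \exp\!\left(-\frac{c_4|x-y|^2}{t}\right),
\ee
with constants depending only on $n,\lambda,L$. For the upper bound, Nash's moment estimate (the differential inequality for $t\mapsto \lVert p(t,\cdot,y)\rVert_{L^2}^2$ fed by the Nash inequality) gives the on-diagonal bound $p(t,x,y)\leq c\,t^{-n/2}$; the Gaussian factor off-diagonal is then produced by a logarithmic perturbation in the spirit of Davies, conjugating the semigroup by $e^{\pm\eps \psi}$ for a bounded Lipschitz weight $\psi$ and optimising in $\eps$ using the ellipticity \eqref{matrix.ell}. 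For the lower bound, Nash's entropy argument, controlling $\int p\log p$ by the Dirichlet energy $\int A\nabla p\cdot\nabla p$, first yields the on-diagonal lower estimate $p(t,x,x)\geq c\,t^{-n/2}$; this is then propagated off-diagonal by a chaining procedure based on the semigroup identity and the upper Gaussian bound.

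Finally, substituting \eqref{gauss.pl} into \eqref{timeint.pl} and performing the change of variable $s=|x|^2/t$ in
$$
\int_0^\infty \frac{1}{t^{n/2}}\exp\!\left(-\frac{c|x|^2}{t}\right)dt = |x|^{2-n}\int_0^\infty s^{\frac{n}{2}-2}\,e^{-cs}\,ds,
$$
together with the observation that for $n\geq 3$ the last integral converges to a finite constant depending only on $n$ and $c$, yields the two-sided bound on $\Gamma$ with constants $C_1,C_2$ depending only on $n,\lambda,L$. The main obstacle is the lower Gaussian bound in \eqref{gauss.pl}: the technical innovation of Fabes--Stroock over \cite{LSW} is precisely that it must be proved \emph{without} invoking Harnack's inequality or the H\"{o}lder continuity of solutions to \eqref{equation}, which forces one to rely on Nash's entropy inequalities and on an ad hoc chaining procedure rather than on the parabolic Harnack inequality that would usually provide the shortest route from on-diagonal to off-diagonal lower estimates.
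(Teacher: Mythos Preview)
The paper does not give its own proof of this proposition: it is stated as a quotation of the result of Fabes--Stroock \cite{fabes-stroock} and is used as a black box throughout Section~\ref{section.n}. Your sketch is a faithful and essentially correct outline of the Fabes--Stroock argument (parabolic representation $\Gamma=\int_0^\infty p(t,\cdot,0)\,dt$, Nash's moment/entropy estimates for the on-diagonal bounds, Davies' exponential perturbation and a chaining argument for the off-diagonal Gaussian factors, then the elementary time integration), so there is nothing to compare against in the paper itself.
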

The following is a straightforward combination of the Caccioppoli's inequality and Proposition \ref{estimate.fund.prop} that will be useful in Section \ref{section.n}.
\begin{lemma}\label{fundamental.lemma}
Let $A\in M(\lambda,L)$ and $\Gamma$ be the fundamental solution associated to $A$. Then, for every $r>0$ we have
$$
\int_{B_{{\frac32}r}\setminus B_r}|\nabla \Gamma|^2\,dx \leq C r^{2-n},
$$
for some constant $C$ depending only on $n,\lambda $ and $L$.
\end{lemma}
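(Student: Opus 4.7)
The plan is to exploit the fact that although $\Gamma$ is only a fundamental solution (hence a distributional solution of $-\mathrm{div}(A\nabla\Gamma)=\delta_0$ on $\R^n$), on any open set that avoids the origin $\Gamma$ is an honest weak solution of the homogeneous equation. Since the annulus $B_{3r/2}\setminus B_r$ is separated from $0$, I would enlarge it slightly to an annulus $B_{2r}\setminus B_{r/2}$ on which $\Gamma$ solves $-\mathrm{div}(A\nabla\Gamma)=0$, and then apply Caccioppoli's inequality.

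Concretely, I would choose a cutoff $\eta\in C_c^\infty(B_{2r}\setminus\overline{B_{r/2}})$ with $\eta\equiv 1$ on $B_{3r/2}\setminus B_r$ and $|\nabla\eta|\leq C/r$, and test the equation satisfied by $\Gamma$ against $\eta^2\Gamma$. Using the ellipticity bound \eqref{matrix.ell}, this yields the standard Caccioppoli estimate
$$
\int \eta^2\, (A\nabla\Gamma\cdot\nabla\Gamma)\,dx \leq C(\lambda,L)\int \Gamma^2\,|\nabla\eta|^2\,dx,
$$
which by ellipticity on the left-hand side gives
$$
\int_{B_{3r/2}\setminus B_r}|\nabla\Gamma|^2\,dx \leq C(\lambda,L)\int_{B_{2r}\setminus B_{r/2}}\Gamma^2\,|\nabla\eta|^2\,dx.
$$

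To bound the right-hand side I would use Proposition \ref{estimate.fund.prop}: on the support of $\nabla\eta$ one has $|x|\geq r/2$, so $\Gamma(x)\leq C_2(n,\lambda,L)|x|^{2-n}\leq C\, r^{2-n}$. Together with $|\nabla\eta|\leq C/r$ and the fact that the support of $\nabla\eta$ has Lebesgue measure at most $C_n r^n$, one gets
$$
\int_{B_{2r}\setminus B_{r/2}}\Gamma^2\,|\nabla\eta|^2\,dx \leq C\, r^{2(2-n)}\cdot r^{-2}\cdot r^n = C\, r^{2-n},
$$
which yields the claimed estimate.

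There is no real obstacle here beyond checking the Caccioppoli test rigorously; the only minor point to be careful about is justifying the use of $\eta^2\Gamma$ as a test function (which is admissible precisely because $\mathrm{supp}\,\eta$ avoids the singularity, so $\Gamma$ and $\eta^2\Gamma$ both belong to $H^1$ there thanks to Proposition \ref{estimate.fund.prop}). The two ingredients — interior Caccioppoli for $A$-harmonic functions and the Fabes–Stroock pointwise upper bound on $\Gamma$ — combine in a routine way to give the scaling $r^{2-n}$ dictated by the homogeneity of $|x|^{2-n}$.
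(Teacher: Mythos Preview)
Your proposal is correct and matches the paper's proof essentially line by line: the paper also chooses $\eta\in C^\infty_c(B_{2r}\setminus B_{r/2})$ with $\eta\equiv 1$ on $B_{\frac32 r}\setminus B_r$ and $|\nabla\eta|\le C(n)/r$, tests the equation with $\Gamma\eta^2$, and then applies Proposition~\ref{estimate.fund.prop} to bound $\int_{B_{2r}\setminus B_{r/2}}\Gamma^2\,dx$ by $C r^{2-n}$. Your remark on the admissibility of $\eta^2\Gamma$ as a test function is a welcome extra justification but does not alter the argument.
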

\begin{proof}
Since the proof coincides with the one of the classical Caccioppoli's inequality, we just sketch the classic strategy.
Therefore, let $\eta \in C^\infty_c(B_{2r}\setminus B_{r/2})$  be such that $0\leq \eta \leq 1, \eta \equiv 1$ on $B_{\frac32 r}\setminus B_r$ and
  $$
  |\nabla \eta|\leq \frac{C(n)}{r}.
  $$
  By testing the equation satisfied by $\Gamma$ with $\Gamma\eta^2$ in $B_{2r}\setminus B_{r/2}$ we get
  $$
  \int_{B_{2r}\setminus B_{r/2}}\eta^2|\nabla \Gamma|^2\,dx\leq \frac{C(n,\lambda,L)}{r^2}  \int_{B_{2r}\setminus B_{r/2}}\Gamma^2\,dx \leq C(n,\lambda,L) r^{2-n}
  $$
  where in the last inequality we used Proposition \ref{estimate.fund.prop}.
\end{proof}
\subsection{\bf{Uniformly elliptic divergence operator. }}We conclude the Section by recalling some results for solutions of divergence form equation associated to $A\in M(\lambda,L)$ that will be used through the paper.\\

The following Proposition is the well-known Moser's iteration argument introduced in \cite[Theorem 2]{moser1961}, that allows to prove $L^p$ estimate for a subsolution of \eqref{equation} in terms of its $L^2$-norm (see the proof of \cite[Proposition 8.20]{GiaMart}).
\begin{proposition}\label{moser.prop}
  Let $A\in M(\lambda,L)$ and $u \in H^1(B_1)$ be a subsolution of \eqref{equation} and $r\in (0,1/2)$. Then, there exists $C=C(n,\lambda,L)$ such that
\be\label{moser.eq}
\left(\frac{1}{r^n}\int_{B_{\frac32 r}\setminus B_{r}} u^{k_i}\,dx\right)^{\frac{2}{k_i}} \leq \frac{C}{r^n}\int_{B_{2r}\setminus B_r}u^2 \,dx, \quad\mbox{with }k_i=\frac{2n^i}{(n-2)^i},
\ee
for every $i\in 1+\N$.
\end{proposition}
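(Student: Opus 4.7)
The statement is the classical Moser iteration for subsolutions, specialised to annular regions, and I would follow the three-step argument given in the cited reference \cite[Proposition 8.20]{GiaMart}. Since the equation is linear and $A\in M(\lambda,L)$, the positive part $u^{+}$ is again a subsolution, so one may assume $u\ge 0$ from the outset.

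The first step would be a power Caccioppoli inequality. For $p\ge 2$ and a cutoff $\eta\in C^\infty_c(B_1)$, I would test the weak subsolution inequality against $\eta^{2}u^{p-1}$ (after the standard truncation $u\wedge M$ and passage to the limit $M\to\infty$ to justify admissibility), expand the gradient by the product rule, apply the ellipticity bounds $\lambda I\le A\le L I$, and absorb the mixed term with Young's inequality, obtaining
$$
\int_{B_1}\bigl|\nabla(\eta u^{p/2})\bigr|^{2}\,dx\;\le\;C(\lambda,L)\,p^{2}\int_{B_1} u^{p}\,|\nabla\eta|^{2}\,dx.
$$
Combining this with the Sobolev inequality (used for $n\ge 3$, which is implicit since $k_i$ is finite only in that range) applied to the compactly supported function $\eta u^{p/2}$ yields
$$
\Bigl(\int_{B_1}|\eta u^{p/2}|^{2^{*}}\,dx\Bigr)^{2/2^{*}}\;\le\;C(n,\lambda,L)\,p^{2}\int_{B_1} u^{p}\,|\nabla\eta|^{2}\,dx,\qquad 2^{*}=\tfrac{2n}{n-2}.
$$

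The iteration would then be applied on a sequence of nested annular neighbourhoods interpolating between the outer region $B_{2r}\setminus B_r$ and the inner region $B_{\frac32 r}\setminus B_r$, with widths shrinking geometrically so that $|\nabla\eta_j|\le C_n\,2^{j}/r$. Setting $p_j=k_j=2(n/(n-2))^{j}$ and iterating $i$ times, then raising the composite inequality to the $1/p_i$ power, one exploits the geometric growth of $p_j$ to ensure that the series $\sum_j 1/p_j$ and $\sum_j j/p_j$ converge. This bounds the telescoping product of prefactors by a universal $C(n,\lambda,L)$ and, after extracting the scaling factor $r^{-n}$, produces the claimed inequality.

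The main technical issue is the annular geometry: a smooth cutoff vanishing on both boundaries of the iteration annulus is incompatible with that annulus reaching the inner sphere $\{|x|=r\}$, on which $u$ need not vanish. I would resolve this either by performing the iteration on auxiliary annuli $B_{\rho_j}\setminus B_{\sigma_j}$ with $\sigma_j$ approaching $r$ strictly from below and invoking monotone convergence to pass to the limit (producing an estimate with a slightly enlarged RHS region that is then sharpened via a covering argument), or more directly by covering $B_{\frac32 r}\setminus B_r$ by a uniformly finite family of balls on which the standard ball version of Moser's iteration applies, the translation-invariance of $M(\lambda,L)$ ensuring uniform constants. Tracking these constants through the final covering step is the only non-routine piece of bookkeeping in the argument.
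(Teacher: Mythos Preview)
The paper does not supply its own proof of this proposition; it is recorded as a classical preliminary, attributed to \cite[Theorem~2]{moser1961} with an explicit pointer to \cite[Proposition~8.20]{GiaMart} for the argument. Your sketch is precisely that standard Moser iteration, so there is nothing to compare against.

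One remark on the ``non-routine'' issue you raise in the last paragraph: the annular-cutoff difficulty is not actually present. Since $u$ is a subsolution on the full ball $B_1\supset B_{2r}$, you may run the iteration with cutoffs $\eta_j\in C^\infty_c(B_{\rho_j})$ supported in \emph{full balls}, where $2r=\rho_0>\rho_1>\cdots>\rho_i\ge\tfrac32 r$ and $\eta_j\equiv 1$ on $B_{\rho_{j+1}}$. Then $\mathrm{supp}\,\nabla\eta_0\subset B_{2r}\setminus B_{\rho_1}\subset B_{2r}\setminus B_r$, so the very first Caccioppoli step already places the right-hand side on $B_{2r}\setminus B_r$; the subsequent steps chain through $\int_{B_{\rho_j}}u^{k_j}$ as usual, and the final step controls $\int_{B_{3r/2}}u^{k_i}\ge\int_{B_{3r/2}\setminus B_r}u^{k_i}$. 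Neither a limiting procedure on the inner radius nor a covering argument is needed.
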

Finally, we conclude the Section with the following Lemma in which we decompose a quadratic form associated to $A\in M(\lambda,L)$ in terms of its ``radial'' and ``spherical'' part.
\begin{lemma}\label{tangential}
Let $A\in M(\lambda,L)$ and $\nu(x)=x/|x| \in  {\mathbb S}^{n-1}$. Then for every vector field $\xi \in \R^n$ we have
\be\label{polar}
(A\xi\cdot \xi) \geq \frac{1}{L}(A\xi\cdot \nu)^2 +\lambda \abs{\xi_{\mathbb{S}}}^2,\ee
where $\xi_{\mathbb{S}}$ is the tangential component of $\xi$ on ${\mathbb S}^{n-1}$.
\end{lemma}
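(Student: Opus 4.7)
The approach is to split $\xi$ into its normal and tangential components relative to $\nu$, and then to extract the two summands on the right-hand side of \eqref{polar} from complementary uses of the two ends of the ellipticity bound.

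First, writing $\xi = t\nu + \xi_{\mathbb{S}}$ with $t=\xi\cdot\nu$ and using the symmetry of $A$, one has
$$
A\xi\cdot\xi \;=\; t^{2}\,A\nu\cdot\nu + 2t\,A\nu\cdot\xi_{\mathbb{S}} + A\xi_{\mathbb{S}}\cdot\xi_{\mathbb{S}},\qquad A\xi\cdot\nu \;=\; t\,A\nu\cdot\nu + A\nu\cdot\xi_{\mathbb{S}}.
$$
Completing the square in the variable $t$ (legitimate because $A\nu\cdot\nu\geq\lambda>0$) then gives the exact identity
$$
A\xi\cdot\xi \;=\; \frac{(A\xi\cdot\nu)^{2}}{A\nu\cdot\nu} \;+\; \Bigl(A\xi_{\mathbb{S}}\cdot\xi_{\mathbb{S}} - \frac{(A\nu\cdot\xi_{\mathbb{S}})^{2}}{A\nu\cdot\nu}\Bigr).
$$
The first term on the right dominates $(A\xi\cdot\nu)^{2}/L$ thanks to the upper bound $A\nu\cdot\nu\leq L$, which already yields the first piece of \eqref{polar}.

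For the second piece it remains to show that the Schur-type complement in the parentheses is at least $\lambda|\xi_{\mathbb{S}}|^{2}$. The plan here is to apply the lower ellipticity bound not to $\xi_{\mathbb{S}}$ itself but to the one-parameter family $\xi_{\mathbb{S}}-\tau\nu$, using the orthogonality $\xi_{\mathbb{S}}\perp\nu$ so that $|\xi_{\mathbb{S}}-\tau\nu|^{2}=|\xi_{\mathbb{S}}|^{2}+\tau^{2}$. This gives, for every $\tau\in\R$,
$$
A\xi_{\mathbb{S}}\cdot\xi_{\mathbb{S}} - 2\tau\,A\nu\cdot\xi_{\mathbb{S}} + \tau^{2}\,A\nu\cdot\nu \;\geq\; \lambda\bigl(|\xi_{\mathbb{S}}|^{2}+\tau^{2}\bigr).
$$
Evaluating at the minimiser $\tau^{\ast}=(A\nu\cdot\xi_{\mathbb{S}})/(A\nu\cdot\nu)$ of the left-hand side collapses it to exactly the Schur complement above, while the right-hand side is trivially $\geq\lambda|\xi_{\mathbb{S}}|^{2}$. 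Combining with the previous step produces \eqref{polar}.

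There is no real obstacle: the argument amounts to the observation that the $2\times 2$ matrix $\bigl(A e_i\cdot e_j\bigr)_{i,j}$ formed from the orthonormal pair $\{\nu,\xi_{\mathbb{S}}/|\xi_{\mathbb{S}}|\}$ (when $\xi_{\mathbb{S}}\neq 0$) inherits the ellipticity bounds $\lambda,L$, so the lemma reduces to a two-dimensional linear-algebra statement. The degenerate case $\xi_{\mathbb{S}}=0$ is immediate from the upper bound $A\nu\cdot\nu\le L$, and $A\nu\cdot\nu>0$ throughout by the lower ellipticity bound.
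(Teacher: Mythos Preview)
Your proof is correct and is essentially the same as the paper's. The paper introduces the $A$-orthogonal component $\xi^A_{\mathbb{S}}:=\xi-\frac{(A\xi\cdot\nu)}{(A\nu\cdot\nu)}\nu$ and writes the identity $(A\xi\cdot\xi)=(A\xi^A_{\mathbb{S}}\cdot\xi^A_{\mathbb{S}})+\frac{(A\xi\cdot\nu)^2}{(A\nu\cdot\nu)}$, then bounds the second term by $(A\xi\cdot\nu)^2/L$ and the first by $\lambda|\xi^A_{\mathbb{S}}|^2\geq\lambda|\xi_{\mathbb{S}}|^2$ via Pythagoras; your ``completing the square in $t$'' reproduces the same identity, and your minimiser $\tau^\ast$ satisfies $\xi_{\mathbb{S}}-\tau^\ast\nu=\xi^A_{\mathbb{S}}$, so the optimisation step is exactly the paper's application of ellipticity to $\xi^A_{\mathbb{S}}$.
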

\begin{proof}
Set
$$
\xi^A_{\mathbb{S}} :=\xi -\frac{(A\xi\cdot \nu)}{(A\nu\cdot \nu)}\nu,\quad \xi_{\mathbb{S}} := \xi-(\xi\cdot \nu)\nu
$$
where $\xi_{\mathbb{S}} \perp \nu$ and $A\xi_{\mathbb{S}}^A \perp \nu$. Then
$$
  (A\xi\cdot \xi)= (A\xi_{\mathbb{S}}^A\cdot \gio{\xi_{\mathbb{S}}^A}) + \frac{(A\xi\cdot \nu)^2}{(A\nu\cdot \nu)} \geq \lambda |\xi_{\mathbb{S}}^A|^2 + \frac{1}{L}(A\xi\cdot \nu)^2.
$$
On the other hand
$$
\xi^A_{\mathbb{S}} = \xi_{\mathbb{S}} +\left[(\xi\cdot \nu) -\frac{(A\xi\cdot \nu)}{(A\nu\cdot \nu)}\right]\nu
$$
and since $\xi_{\mathbb{S}} \perp \nu$, we get $|\xi^A_{\mathbb{S}}|^2\geq |\xi_S|^2$, as we claimed.
\end{proof}
\section{The $2$-dimensional case}\label{section.2}
In this Section we address the problem in $\R^2$ by proving Theorem \ref{ACFn2} and Proposition \ref{lio2.introduction}, from which Corollary \ref{lio2.corollary} follows by simple arguments.\\

We start by showing the monotonicity of the Alt-Caffarelli-Friedman type formula introduced in \eqref{acf.elliptic1}.
\begin{proof}[Proof of Theorem \ref{ACFn2}]
Since $u_i \in H^1(B_1)$, all the integrals in \eqref{acf.elliptic1} are locally absolutely
continuous functions in $r$, for $r \in (0,1)$. Thus, the result follows once we prove that $(\log\Phi(r))' \geq 0$ for almost every
$r \in (0,1)$. Hence, by direct computations we have
$$
\frac{\,d}{\,dr}\log\Phi(r) = \sum_{i=1}^m\ddfrac{\int_{\partial B_r}{ A \nabla u_i \cdot \nabla u_i \,d\mathcal{H}^1}}{\int_{B_r}{A \nabla u_i\cdot \nabla u_i\,dx}}
-2m\frac{\alpha^*_m}{r},\quad\mbox{with }\alpha^*_m := \frac{m}{2}\sqrt{\frac{\lambda}{L}}.
$$
Since \gio{$u_i$ is non-negative, it holds $2\langle A \nabla u_i, \nabla u_i\rangle \leq \mathrm{div}(A\nabla u_i^2)$ in a weak sense}, and by H\"{o}lder inequality we get
\be\label{denominatore}
\int_{ B_r}{A \nabla u_i\cdot \nabla u_i\,dx} \leq
\int_{\partial B_r}{u_i (A \nabla u_i\cdot \nu) \,d\mathcal{H}^1}
\leq
\left(\int_{\partial B_r}{u_i^2 \,d\mathcal{H}^1}\right)^{1/2} \left(\int_{\partial B_r}{(A \nabla u_i \cdot \nu)^2  \,d\mathcal{H}^1}\right)^{1/2}.
\ee
On the other side, by applying Lemma \ref{tangential} with $\xi = \nabla u_i$, we get
\begin{align}\label{numeratore}
\begin{aligned}
\int_{\partial B_r}{A \nabla u_i \cdot \nabla u_i \,dx} &\geq
\frac{1}{L}\int_{\partial B_r}{(A \nabla u_i \cdot \nu)^2  \,d\mathcal{H}^1} +\lambda
\int_{\partial B_r}{\abs{\nabla_{\mathbb{S}} u_i}^2 \,d\mathcal{H}^1}\\
&\geq 2\sqrt{\frac{\lambda}{L}} \left(\int_{\partial B_r}{(A \nabla u_i \cdot \nu)^2  \,d\mathcal{H}^1}\right)^{1/2}\left(\int_{\partial B_r}{{\abs{\nabla_{S} u_i}}^2 \,d\mathcal{H}^1}\right)^{1/2},
\end{aligned}
\end{align}
where $\nabla_{\mathbb{S}}$ represents the tangential gradient on $\mathbb{S}^1$. Now, let $u_i^{(r)}\colon {\mathbb{S}}^{1}\to \R$ be such that $u_i^{(r)}(x)=u_i(rx)$ and consider $\Omega_i^r=\{u_i^{(r)}>0\}\cap {\mathbb{S}}^1$. Thus, by \eqref{denominatore} and \eqref{numeratore} we get
\begin{align}\label{ray}
\begin{aligned}
\frac{\,d}{\,dr}\log\Phi(r) &\geq 2\sqrt{\frac{\lambda}{L}}  \frac1r \sum_{i=1}^m\left(\ddfrac{\int_{\mathbb{S}^{1}}{{\abs{\nabla_{\mathbb{S}} u_i^{(r)}}}^2 \,d\mathcal{H}^1}}{\int_{\mathbb{S}^{1}}{(u_i^{(r)})^2 \,d\mathcal{H}^1}}\right)^{1/2}\!
-2m\frac{\alpha^*_m}{r}\\
&\geq \frac{2m}{r}\sqrt{\frac{\lambda}{L}}  \left(\sum_{i=1}^m\frac{\Lambda_1(\Omega_i^r)^{1/2}}{m}
-\frac{m}{2}\right),
\end{aligned}
\end{align}
where for every $\mathcal{H}^1$-measurable set $\omega\subset \mathbb{S}^1$ we denote with $\Lambda_1(\omega)$ the first Dirichlet eigenvalue associated to subsets of $\mathbb{S}^1$, that is
$$
\Lambda_1(\omega):=\min\left\{\ddfrac{\int_{\mathbb{S}^{1}}{{\abs{\nabla_{\mathbb{S}} v}}^2 \,d\mathcal{H}^1}}{\int_{\mathbb{S}^{1}}{v^2 \,d\mathcal{H}^1}}\colon v \in H^1(S^1),\quad\mathcal{H}^1(\{v\neq 0\}\setminus \omega) =0\right\}.
$$
Therefore, by defining with $\mathcal{P}^m$ the set of $m$-partitions $\omega=(\omega_1,\dots,\omega_m)$ of $\mathbb{S}^{1}$, we finally get
\be
\frac{\,d}{\,dr}\log\Phi(r) \geq \frac{2m}{r}\sqrt{\frac{\lambda}{L}}  \left[\inf_{\omega \in \mathcal{P}^m} \sum_{i=1}^m \frac{\Lambda(\omega_i)^{1/2}}{m}
-\frac{m}{2}\right],
\ee
which corresponds to the minimization problem associated to the same result for the classical Laplacian (see \cite{ACF,CTV}). Thus, by \gio{following the direct computations in \cite[Lemma 4.2]{CTV}}, it follows that
$$
\inf_{\omega \in \mathcal{P}^m} \sum_{i=1}^m \frac{\Lambda(\omega_i)^{1/2}}{m}
=\frac{m}{2},
$$
which immediately implies the claimed result.
\end{proof}
Now, by combining the Caccioppoli inequality with Theorem \ref{ACFn2}, we can prove the Liouville-type results Proposition \ref{lio2.introduction}, for vector of sub-harmonic functions.
\begin{proof}[Proof of Proposition \ref{lio2.introduction}]
For the sake of contradiction, assume that $u_i \not \equiv 0$ for every $i=1,\dots,m$, \gio{thus there exists a radius $r_0>0$ such that $\Phi(r_0)>0$}. On the other hand, by the Caccioppoli inequality
  $$
\int_{B_r}{\langle A \nabla u_i, \nabla u_i\rangle  \,dx} \leq \frac{C}{r^2}\int_{B_{2r}\setminus B_r}u_i^2\,dx,\qquad \mbox{for every }r>0.
  $$
  and for some $C=C(\lambda,L)$. On the other hand, fixed $0<r_0\leq r$ we infer that
  $$
  0\leq \Phi(r_0)\leq \Phi(r)\leq \frac{1}{r^{2m\alpha^*}}\prod_{i=1}^m \frac{C}{r^2}\left(\int_{B_{2r}\setminus B_r}u_i^2\,dx\right) \leq C r^{2m(\gamma-\alpha_*)},
  $$
  which leads to a contradiction for $r$ large enough.
\end{proof}
Naturally, Proposition \ref{lio2.introduction} implies an explicit estimate for the least growth at infinity $k^*(2,\lambda,L)$ of entire solutions of \eqref{equation} with $n=2$.
\begin{corollary}\label{lio2.corollary}
  Let $A \in M(\lambda,L)$ and $u \in H^1_\loc(\R^2)$ be a solution to
  \be\label{equation.entire}
  -\mathrm{div}(A(x)\nabla u) = 0 \quad\text{in $\R^2$}.
  \ee
  Given $k^*(2,\lambda,L)= \sqrt{\frac{\lambda}{L}}$, if for some $\gamma \in (0,k^*)$ there exists $C>0$ such that
  \be\label{e:gro}
  |u|(x)\leq C(1+\abs{x}^\gamma)\quad\mbox{in }\R^2,
  \ee
  then $u$ is constant.
\end{corollary}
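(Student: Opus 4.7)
The plan is to reduce the statement to the two-phase Liouville property in Proposition \ref{lio2.introduction} with $m=2$, exploiting the happy coincidence that $k^*_2=\sqrt{\lambda/L}=k^*$, so the growth threshold already matches the hypothesis. I would argue by contradiction, assuming that $u$ is non-constant. Since $A\in M(\lambda,L)$, the De Giorgi-Nash-Moser theorem yields $u\in C^0(\R^2)$, so its image is a nondegenerate interval $I\subset\R$. I would then fix any $c$ in the interior of $I$ together with some point $x_0\in\R^2$ with $u(x_0)=c$. After the translation $y=x-x_0$, which replaces $A$ by $\tilde A(\cdot):=A(\cdot+x_0)\in M(\lambda,L)$, I may assume $u(0)=c$.

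Next, I would introduce the two nonnegative functions
$$
v_1:=(u-c)^+,\qquad v_2:=(u-c)^-,
$$
which lie in $H^1_\loc(\R^2)\cap C^0(\R^2)$, have disjoint positivity sets, and both vanish at the origin. Since both $u-c$ and $c-u$ are weak solutions of $-\mathrm{div}(\tilde A\nabla\cdot)=0$ (constants belong to the kernel of the operator), the standard Stampacchia truncation argument gives $-\mathrm{div}(\tilde A\nabla v_i)\leq 0$ in $\R^2$ for $i=1,2$. The growth bound \eqref{e:gro} yields
$$
v_i(x)\leq |u(x)-c|\leq C(1+|x|^\gamma)+|c|\leq C'(1+|x|^\gamma),
$$
for a constant $C'>0$ depending on $c$, with $\gamma<k^*=k^*_2$. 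All hypotheses of Proposition \ref{lio2.introduction} are therefore in force for $(v_1,v_2)$ and the matrix $\tilde A$.

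Applying the proposition, one of the $v_j$ must vanish identically, i.e., either $u\le c$ or $u\ge c$ throughout $\R^2$. But $c$ lies in the interior of the image of $u$, so both $\{u>c\}$ and $\{u<c\}$ are nonempty by continuity, a contradiction. Hence $u$ must be constant. The substantial analytic content, namely the Alt-Caffarelli-Friedman formula of Theorem \ref{ACFn2} and the multiphase Liouville bound, is already in place; here the only items to verify are the routine facts that $(u-c)^\pm$ are subsolutions of the translated equation, that the shifted matrix still belongs to $M(\lambda,L)$, and that translating the base point preserves a polynomial growth bound of fixed exponent. I therefore do not expect any genuine obstacle beyond the bookkeeping required to align the hypotheses of Proposition \ref{lio2.introduction}.
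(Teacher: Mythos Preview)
Your proposal is correct and follows essentially the same route as the paper: both argue by contradiction, split $u-c$ (the paper writes $\beta$ for your $c$) into its positive and negative parts, and invoke Proposition~\ref{lio2.introduction} with $m=2$. If anything, you are slightly more careful than the paper in translating the base point so that $v_1(0)=v_2(0)=0$, which is indeed required by the hypotheses of Proposition~\ref{lio2.introduction}.
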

\begin{proof}
\gio{Let $\beta \in \R$ and set $u_1:=\max\{u-\beta,0\},u_2:=\max\{\beta-u,0\}$ being with disjoint supports. Naturally, in light of \eqref{e:gro}, there exists $\beta$ such that neither $u_1\equiv 0$ and $u_2\equiv 0$. On the other hand, they fulfill the assumptions of Proposition \ref{lio2.introduction} in the case $m=2$, which implies that either $u_1\equiv 0$ or $u_2\equiv 0$, in contradiction with the choice of $\beta$.}
\end{proof}
\begin{remark}\label{lin.ni}
  In \cite{lin-ni}, the authors already proved the existence of an optimal least growth for non-trivial entire solutions of \eqref{equation.entire} in the case $\mbox{det}(A)$ is constant, by relating the problem to the Riemann mapping theorem for the manifold $(\R^2 ,g)$ with $g$ a uniformly elliptic metric. In this direction, Proposition \ref{lio2.introduction} answers the open question on the ``optimal gap phenomenon'' of Question 1 of \cite{lin-ni} by proving the sharpness of the least growth of order $\sqrt{\lambda/L}$. Indeed, this result finalizes the analysis carried out in \cite{lin-ni} in $\R^2$ and it extend the existence of an optimal least growth at infinity to the case of non-negative subharmonic functions with disjoint positivity sets.
\end{remark}
\begin{remark}\label{counter}
The optimality of Proposition \ref{lio2.introduction} and Corollary \ref{lio2.corollary} follows by rephrasing the example of \cite{meyers}. More precisely, given $0<\lambda\leq L$ set $A=(a_{ij})_{ij}$ with
$$
a_{ij}(x):= \lambda \delta_{ij} + \left(L-\lambda\right)\frac{x_ix_j}{|x|^2}.
$$
By passing to the polar coordinates $(r,\theta)$ in $\R^2$, it follows that a solution $u$ of \eqref{equation.entire} with $A$ as before satisfies
$$
\gio{\frac{L}{r} \partial_r(r\partial_ru) + \frac{\lambda}{r^2}\partial^2_\theta u = 0.}
$$
Therefore, given
$$
u(x)=|x|^{\sqrt{\frac{\lambda}{L}}}\frac{x_2}{|x|} = r^{\sqrt{\frac{\lambda}{L}}} \sin(\theta),
$$
we have that $u$ is solution of \eqref{equation.entire} satisfying the optimal least growth at infinity. On the other hand, we can generalize the previous example to the case of junction points: indeed, for every $m \in \N, m\geq 2$ the collection $(u_i)_{i=1}^m$ defined as
$$
u_i(x)=
\begin{cases}
r^{\frac{m}{2}\sqrt{\frac{\lambda}{L}}}\left|\sin\left(\frac{m}{2}\theta\right)\right| &\mbox{for }\theta \in \left[\frac{2\pi}{m} (i-1),\frac{2\pi}{m} i\right]\\
0 & \mbox{otherwise},
\end{cases}
,\quad\mbox{for }i=1,\dots,m
$$
satisfies the optimal least growth at infinity.
\end{remark}
\section{The $n$-dimensional case}\label{section.n}
In this Section we address the higher-dimensional case $n\geq 3$.
As we mentioned in the Introduction, in \cite[Section 4]{piccinini-spagnolo} the authors claimed the existence of a decay estimate for the Dirichlet energy associated to solutions to \eqref{equation}, which does not imply the H\"{o}lder continuity of solutions.\\ More precisely, by following the same computations of Section \ref{section.2}, it can be proved the following decay estimate of the Dirichlet energy in the spirit of \cite{piccinini-spagnolo,finn-gilbarg}.
\begin{lemma}\label{lem.finn-gilbar}
 Let $A \in M(\lambda,L)$ and $u_1, u_2 \in H^1(B_1)$ be two nonnegative functions such that
  \be\label{subharmonicity}
  -\mathrm{div}(A(x)\nabla u_i)\leq 0 \quad\text{in $B_1$}.
  \ee
  Assume that $u_1\cdot u_2 \equiv 0$ and $u_1(0)=u_2(0)=0$. Then, the map
  $$
r \mapsto \frac{1}{r^{n-2+2\mu}}\int_{B_r}(A\nabla u\cdot \nabla u)\,dx,\quad\mbox{with }\mu:=\sqrt{\frac{\lambda}{L}(n-1)}-\frac{n-2}{2}
$$
is monotone non-decreasing in $r$, for $r \in (0,1)$.
\end{lemma}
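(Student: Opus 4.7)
Following the strategy of Theorem \ref{ACFn2}, I interpret the displayed map as the ACF-type product
\[
\Phi(r) \;:=\; \prod_{i=1}^{2}\frac{1}{r^{n-2+2\mu}}\int_{B_{r}} A\nabla u_{i}\cdot \nabla u_{i}\,dx,
\]
the natural $n$-dimensional analog of \eqref{acf.elliptic1}; the individual energy decay stated in the lemma then follows by combining the monotonicity of $\Phi$ with a Caccioppoli upper bound on each factor, in the spirit of Piccinini-Spagnolo and Finn-Gilbarg. Writing $J_{i}(r) := \int_{B_{r}} A\nabla u_{i}\cdot \nabla u_{i}\,dx$, it is enough to verify
\[
\frac{d}{dr}\log \Phi(r) \;=\; \sum_{i=1}^{2}\frac{J_{i}'(r)}{J_{i}(r)} - \frac{2(n-2+2\mu)}{r} \;\geq\; 0\qquad \text{for a.e. }r\in(0,1).
\]

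\textbf{Main steps.} The argument replays the four-step pattern of the 2D proof, with $\mathbb{S}^{1}$ replaced by $\mathbb{S}^{n-1}$. First, since $u_{i}\geq 0$ and \eqref{subharmonicity} hold, one has $\mathrm{div}(A\nabla u_{i}^{2})\geq 2 A\nabla u_{i}\cdot \nabla u_{i}$ in the weak sense, and integration by parts on $B_{r}$ together with Cauchy-Schwarz yields
\[
J_{i}(r)\;\leq\;\left(\int_{\partial B_{r}} u_{i}^{2}\,d\mathcal{H}^{n-1}\right)^{1/2}\left(\int_{\partial B_{r}} (A\nabla u_{i}\cdot \nu)^{2}\,d\mathcal{H}^{n-1}\right)^{1/2}.
\]
Next, Lemma \ref{tangential} applied to $\xi=\nabla u_{i}$, coupled with the AM-GM inequality $a+b\geq 2\sqrt{ab}$ on the integrated radial/spherical parts, produces
\[
J_{i}'(r)\;\geq\; 2\sqrt{\frac{\lambda}{L}}\,\left(\int_{\partial B_{r}} (A\nabla u_{i}\cdot \nu)^{2}\,d\mathcal{H}^{n-1}\right)^{1/2}\left(\int_{\partial B_{r}} |\nabla_{\mathbb{S}} u_{i}|^{2}\,d\mathcal{H}^{n-1}\right)^{1/2}.
\]
Dividing, rescaling via $u_{i}^{(r)}(\theta):=u_{i}(r\theta)$ on $\mathbb{S}^{n-1}$ and invoking the Rayleigh characterization of the first Dirichlet eigenvalue $\Lambda_{1}(\omega)$ of $-\Delta_{\mathbb{S}^{n-1}}$ on $\omega\subset\mathbb{S}^{n-1}$, one obtains
\[
\frac{J_{i}'(r)}{J_{i}(r)}\;\geq\; \frac{2}{r}\sqrt{\frac{\lambda}{L}}\,\sqrt{\Lambda_{1}(\Omega_{i}^{r})},\qquad \Omega_{i}^{r}:=\{u_{i}^{(r)}>0\}\cap\mathbb{S}^{n-1}.
\]
The disjointness of positivity sets of $u_{1},u_{2}$ passes to $\Omega_{1}^{r},\Omega_{2}^{r}\subset\mathbb{S}^{n-1}$. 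Since the definition of $\mu$ is precisely equivalent to the identity $n-2+2\mu = 2\sqrt{(n-1)\lambda/L}$, summing over $i$ reduces $(\log\Phi)'(r)\geq 0$ to the Friedland-Hayman-type optimal partition inequality
\[
\sqrt{\Lambda_{1}(\omega_{1})}+\sqrt{\Lambda_{1}(\omega_{2})}\;\geq\; 2\sqrt{n-1}\qquad\text{for all disjoint open }\omega_{1},\omega_{2}\subset \mathbb{S}^{n-1},
\]
with equality at complementary hemispheres $(\Lambda_{1}=n-1)$.

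\textbf{Main obstacle.} The whole weight of the proof sits on this last spherical inequality. In the two-dimensional case it collapses to the elementary AM-HM estimate applied to arc lengths, $|\omega_{1}|^{-1}+|\omega_{2}|^{-1}\geq 4/(|\omega_{1}|+|\omega_{2}|)\geq 2/\pi$, which is exactly the content of \cite[Lemma 4.2]{CTV} used in the proof of Theorem \ref{ACFn2}. For $n\geq 3$ I expect to proceed by Sperner-type spherical symmetrization, which replaces $(\omega_{1},\omega_{2})$ by complementary spherical caps without increasing the sum $\sqrt{\Lambda_{1}(\omega_{1})}+\sqrt{\Lambda_{1}(\omega_{2})}$, followed by a one-parameter analysis of the Legendre eigenvalue equation that identifies the equipartition into hemispheres as the unique minimizer. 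This is the delicate point: unlike the classical Friedland-Hayman statement $\gamma_{1}+\gamma_{2}\geq 2$ (which is not by itself sufficient, the function $\gamma\mapsto\sqrt{\gamma(\gamma+n-2)}$ being concave), the square-root version needed here must be extracted directly from the explicit cap computation.
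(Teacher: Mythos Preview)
Your proposal is correct and follows essentially the same route as the paper: both set up the product functional $\Phi$, derive the same differential inequality via Lemma~\ref{tangential} and the subsolution estimate \eqref{denominatore}--\eqref{numeratore}, and reduce everything to the spherical partition inequality $\sqrt{\Lambda_1(\omega_1)}+\sqrt{\Lambda_1(\omega_2)}\geq 2\sqrt{n-1}$, handled by Faber--Krahn reduction to caps followed by the balancing/convexity analysis of \cite{ACF,friedland}. Your caution that the classical Friedland--Hayman inequality $\gamma_1+\gamma_2\geq 2$ is not by itself sufficient (since $\gamma\mapsto\sqrt{\gamma(\gamma+n-2)}$ is concave) is well taken; the paper does not spell this out either, but points to \cite{LSnoris} for the detailed cap computation that delivers the square-root version directly.
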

\begin{proof}
We simply sketch few steps of the proof. By following the lines of Theorem \ref{ACFn2}, if we set
$$
\Phi(r) := \prod_{i=1}^2\frac{1}{r^{n-2+2\mu}}\int_{B_r}(A\nabla u_i\cdot \nabla u_i)\,dx,
$$
we get
$$
\frac{\,d}{\,dr}\log\Phi(r) \geq
 2\sqrt{\frac{\lambda}{L}}\frac1r \sum_{i=1}^2 \left(\ddfrac{\int_{{\mathbb S}^{n-1}}{{\abs{\nabla_{\mathbb{S}} u_i^{(r)}}}^2 \,d\mathcal{H}^{n-1}}}{\int_{{\mathbb S}^{n-1}}{(u_i^{(r)})^2 \,d\mathcal{H}^{n-1}}}\right)^{1/2}\!
-\frac{4}{r}\sqrt{\frac{\lambda}{L}(n-1)}
$$
where $u_i^{(r)}\colon {\mathbb S}^{n-1}\to \R$ is such that $u_i^{(r)}(x)=u_i(rx)$. Thus, for every $\mathcal{H}^{n-1}$-measurable set $\omega\subset {\mathbb S}^{n-1}$ we denote with $\Lambda_1(\omega)$ the first Dirichlet eigenvalue of $\omega$ associated to the Laplace-Beltrami operator of ${\mathbb S}^{n-1}$. Then, by defining with $\mathcal{P}^2$ the set of $2$-partitions $(\omega_1,\omega_2)$ of ${\mathbb S}^{n-1}$, we finally get

$$
 \sum_{i=1}^2 \left(\ddfrac{\int_{{\mathbb S}^{n-1}}{{\abs{\nabla_{\mathbb{S}} u_i^{(r)}}}^2 \,d\mathcal{H}^{n-1}}}{\int_{{\mathbb S}^{n-1}}{(u_i^{(r)})^2 \,d\mathcal{H}^{n-1}}}\right)^{1/2} \geq  2\inf_{\omega \in \mathcal{P}^2} \frac{(\Lambda_1(\omega_1))^{1/2}+(\Lambda_1(\omega_2))^{1/2}}{2}\geq 2\sqrt{n-1}.
$$
\gio{Indeed, by exploiting the validity of Faber-Krahn type inequality on ${\mathbb S}^{n-1}$, we can restrict the minimization problem to the class of spherical caps in ${\mathbb S}^{n-1}$. Ultimately, by  adapting the arguments in \cite{ACF,friedland} (see also the detailed computation in \cite[Section 2.2.3]{LSnoris}) for the convexity of the first eigenvalue among spherical caps, it is possible to prove optimal balancing and show that the minimum is achieved by splitting the spheres in two disjoint hemispheres.}
\end{proof}
It is natural to expect that the previous estimate is far from being optimal. Indeed, it is easy to see that the exponent $\mu$ leads to H\"older continuity by Morrey's embedding for specific choices of $\lambda$ and $L$, depending on $n$ only, in the range $n\in [2,6]$. Therefore, we start by proving Theorem \ref{new.way.intro}, in which the H\"{o}lder continuity of solutions to \eqref{equation} is deduced through the study of a weighted Dirichlet-type energy similar to the one of the Alt-Caffarelli-Friedman formula of \cite{ACF}.
\begin{proof}[Proof of Theorem \ref{new.way.intro}]
Let $\eta_\eps \in C^\infty_c(B_\eps)$ be a standard mollifier and consider the regularized matrix $A^\eps := (a_{ij}^\eps)_{ij} \in M(\lambda,L)$ with entries $a_{ij}^\eps :=a_{ij}\ast\eta_\eps$. Then, set $u_\eps, v_\eps\in C^\infty(B_1)$ be such that
$$
\begin{cases}
\mathrm{div}(A^\eps \nabla u_\eps)= 0 &\mbox{in }B_1\\
u_\eps = u &\mbox{on }\partial B_1
\end{cases},\qquad v_\eps := u_\eps - \intn_{B_r}u_\eps\,dx,
$$
and $\Gamma_\eps$ to be the fundamental solution associated to $A^\eps$ (see \cite{LSW} for a detailed discussion on the approximation of $\Gamma$). By adapting the lines of \cite[Remark 1.5]{CJK}, we deduce that
$$
\int_{B_r}\Gamma_\eps(A^\eps\nabla v_\eps\cdot \nabla v_\eps)\,dx \leq \frac{C(n,\lambda,L)}{r^n}\int_{B_{2r}}v_\eps^2\,dx\quad\mbox{for every }r \in (0,1/2),
$$
which implies, as $\eps\to 0^+$, that
\be\label{e:cacco}
\int_{B_r}\Gamma(A\nabla u\cdot \nabla u)\,dx \leq \frac{C(n,\lambda,L)}{r^n}\int_{B_{2r}}(u-u_{B_r})^2\,dx\quad\mbox{for every }r \in (0,1/2).
\ee
Now, fix $r \in (0,1/2)$ and $\phi \in C^\infty_c(B_{\frac32 r})$ be such that $0\leq \phi\leq 1$ and $\phi\equiv 1$ in $B_r$. Then, integrating by parts we get
\begin{align*}
2 \int_{B_{\frac32 r}} \Gamma_\eps(A^\eps\nabla v_\eps \cdot \nabla v_\eps)\phi^2 \,dx&\leq -\int_{B_{\frac32 r}}(A^\eps\nabla (v_\eps^2) \cdot \nabla (\Gamma\phi^2))\,dx\\
&\leq  - \int_{B_{\frac32 r}} \phi^2 (A^\eps\nabla (v_\eps^2)  \cdot \nabla \Gamma_\eps)\,dx - 2 \int_{B_{\frac32 r}\setminus B_r} \phi \Gamma_\eps (A^\eps\nabla (v_\eps^2) \cdot \nabla \phi)\,dx,
\end{align*}
\gio{where we used that $\mathrm{div}(A^\eps \nabla(v_\eps^2))=2 (A^\eps \nabla v_\eps \cdot \nabla v_\eps)$ in a weak sense.} Now, notice that
\begin{align*}
 - \int_{B_{\frac32 r}} \phi^2 (A^\eps\nabla v_\eps^2 \cdot \nabla \Gamma_\eps)\,dx
&\,\,\gio{=}\,\int_{B_{\frac32 r}} v_\eps^2 (A^\eps\nabla \Gamma_\eps \cdot \nabla \phi^2)\,dx- \int_{B_{\frac32 r}} A^\eps\nabla \Gamma_\eps \cdot \nabla (v_\eps^2 \phi^2)\,dx\\
&\leq 2\int_{B_{\frac32 r}\setminus B_r} v_\eps^2 \phi (A^\eps\nabla \Gamma_\eps \cdot \nabla \phi)\,dx,
\end{align*}
where in the second inequality we used that $-\mathrm{div}(A^\eps\nabla \Gamma_\eps)\geq 0$ in $\R^n, \gio{(\phi v_\eps)^2 \geq 0}$ and $\phi$ has compact support. Ultimately, by collecting the previous inequality and letting $\eps\to 0^+$, we finally get
\be\label{dis.estimate}
\int_{B_{\frac32 r}} \Gamma(A\nabla v \cdot \nabla v)\phi^2 \,dx \leq  \int_{B_{\frac32 r}\setminus B_r} v^2 \phi (A\nabla \Gamma\cdot \nabla \phi)\,dx - \int_{B_{\frac32 r}\setminus B_r} \phi \Gamma(A\nabla v^2\cdot \nabla \phi)\,dx,
\ee
where $v=u-u_{B_r}$. On one hand, we can estimate the last term in \eqref{dis.estimate} by applying the Young's inequality, that is
\begin{align*}
 \left|\int_{B_{\frac32 r}\setminus B_r} v\phi \Gamma (A\nabla v \cdot \nabla \phi)\,dx\right| &\leq  \frac{L C_2}{r^{n-2}}\int_{B_{\frac32 r}\setminus B_r} v^2 |\nabla \phi|^2 \,dx +
 \int_{B_{\frac32 r}\setminus B_r} \Gamma \phi^2  (A\nabla v \cdot \nabla v) \,dx\\
 &\leq  \frac{C(n,\lambda,L)}{r^{n}}\int_{B_{\frac32 r}\setminus B_r} v^2  \,dx +
 \int_{B_{\frac32 r}\setminus B_r} \Gamma (A\nabla v \cdot \nabla v)  \,dx,
\end{align*}
where in the first inequality we used the upper bound of Proposition \ref{estimate.fund.prop} and in the last one that $\phi \in [0,1]$. On the other hand, by Proposition \ref{moser.prop}, there exists an index $j \in 1+\N$, depending only on $n$, such that $4\leq k_j$ (see \eqref{moser.eq}) and so
$$
\left(\int_{B_{\frac32 r}\setminus B_r} v^{4}\,dx\right)^{\frac{1}{2}} \leq
C(n) r^{n\frac{k_j-4}{2k_j}}\left(\int_{B_{\frac32 r}\setminus B_r} v^{k_j}\,dx\right)^{\frac{2}{k_j}}
\leq C(n,\lambda,L) r^{-n/2}\int_{B_{2r}\setminus B_r}v^2 \,dx.
$$
Therefore, by Cauchy-Schwarz inequality and Lemma \ref{fundamental.lemma} it follows that
\begin{align*}
\int_{B_{\frac32 r}\setminus B_r}v^2 \phi (A\nabla \Gamma \cdot \nabla \phi)\,dx &\leq  \frac{C(n,L)}{r}\int_{B_{\frac32 r}\setminus B_r}v^2  |\nabla \Gamma|\,dx\\
& \leq \frac{C(n,L)}{r}\left(\int_{B_{\frac32 r}\setminus B_r}v^4  \,dx\right)^{1/2}\left(\int_{B_{\frac32 r}\setminus B_r}  |\nabla \Gamma|^2\,dx\right)^{1/2}\\
&\leq \frac{C(n,\lambda,L)}{r^n}\int_{B_{2r}\setminus B_r}v^2\,dx.
\end{align*}
Since $v=u-u_{B_r}$, by collecting all the previous estimates and applying the Poincar\'{e} inequality, we finally deduce
\begin{align}\label{eq}
\begin{aligned}
\int_{B_{r}}\Gamma  (A\nabla u \cdot \nabla u) \,dx &\leq  \frac{C(n,\lambda,L)}{r^{n}}\int_{B_{2 r}\setminus B_r} (u-u_{B_r})^2  \,dx +
 \int_{B_{\frac32 r}\setminus B_r} \Gamma (A\nabla u \cdot \nabla u) \,dx\\
&\leq \frac{C(n,\lambda,L)}{r^{n-2}}\int_{B_{2 r}\setminus B_r} A\nabla u\cdot \nabla u  \,dx +
 \int_{B_{2 r}\setminus B_r} \Gamma (A\nabla u \cdot \nabla u) \,dx\\
&\leq \tilde{C}(n,\lambda,L)
 \int_{B_{2 r}\setminus B_r} \Gamma (A\nabla u \cdot \nabla u) \,dx
\end{aligned}
\end{align}
where in the last inequality we used the lower bound of Proposition \ref{estimate.fund.prop}.\\
Therefore, by adding to \eqref{eq} the term $\tilde{C}\int_{B_{r}}\Gamma (A\nabla u\cdot \nabla u)\,dx$, we get
$$
\int_{B_r} \Gamma (A\nabla u \cdot \nabla u)  \,dx \leq
t
 \int_{B_{2 r}} \Gamma (A\nabla u \cdot \nabla u)  \,dx,
$$
with $t = \tilde{C}/(\tilde{C}+1) \in (0,1)$. Iterating the previous inequality, by considering integer $k$ such that $2^{-(k+1)}R<r\leq 2^{-k}R$, we deduce
\be\label{quasi}
\int_{B_r} \Gamma  (A\nabla u \cdot \nabla u)  \,dx \leq 2^\alpha\left(\frac{r}{R}\right)^{2\alpha}
\int_{B_R} \Gamma  (A\nabla u \cdot \nabla u)  \,dx \quad \mbox{for } 0<r< R\leq \frac12,
\ee
where $\alpha = -\frac{1}{2}\log_2(t)$. Finally, by applying the Poincar\'{e}'s inequality in the left hand side of \eqref{quasi} and \eqref{e:cacco} in its right hand side, it follows that
\be\label{otherwise}
\int_{B_r} (u-u_{B_r})^2 \,dx \leq C(n,  \lambda,L) r^{n+2\alpha}
\int_{B_{1}} u^2 \,dx,
\ee
for $r\in (0,1/2)$, and so the last part of the statement easily follows by Campanato's embedding.
\end{proof}
\begin{remark}\label{consid1}
In \cite{fabes-stroock} the authors derived both Nash's theorem on
continuity and Moser's Harnack principle by combining \eqref{estimate.fundamental} with an argument inspired by the works of N.V. Krylov and M.N. Safanov. In Theorem \ref{new.way.intro}, the intention is to simplify this approach in the elliptic case by constructing an $n$-dimensional version of the well-known proof of the H\"{o}lder continuity for solutions to \eqref{equation} in $\R^2$, based on the K.O. Widman’s hole-filling technique \cite{widman} and the Caccioppoli's inequality.
\end{remark}
Inspired by the analysis developed in \cite[Theorem 5.1]{CTV} and \cite[Theorem 1.3]{NTTV} for the study of singular-perturbation problem with regular coefficients, in the following result we show how the existence of an optimal Liouville exponent $k^*(n,\lambda,L)\in(0,1)$ among entire solutions to \eqref{equation} implies the local H\"{o}lder regularity for every $\alpha \in (0,k^*(n,\lambda,L))$.
\begin{definition}\label{def.optimal}
\gio{We say that $k^*_{\mathcal{F}}\in \R$ is the optimal Liouville exponent in $\mathcal{F}\subseteq M(\lambda,L)$ if it is the supremum over all exponents $\gamma>0$ with the property that if $u\in H^1_\loc(\R^n)$ is a solution of \eqref{equation.global} with $A \in \mathcal{F}$, such that
\be\label{e:gamma}
|u(x)|\leq C\left(1+|x|^{\gamma}\right)\quad\mbox{in } \R^n,
\ee
for some $C>0$, then $u$ is constant in $\R^n$.}
\end{definition}
Through this Section, we discuss two specific cases of $\mathcal{F}\subseteq M(\lambda,L)$:
\begin{itemize}
  \item if $\mathcal{F}=M(\lambda,L)$, we simply write $k^*=k^*(n,\lambda,L)$;
  \item if $\mathcal{F}=\{A\}$ for some $A\in M(\lambda,L)$, we simply say that $k^*_A \in (0,1]$ is the optimal Liouville exponent associated to $A$.
\end{itemize}
Despite the well-known implication of the Harnack's inequality in terms of the behavior of entire solutions at infinity (see for example \cite[Section 6]{moser1961}), the opposite implication of Theorem \ref{liovu.intro} was not known for divergence equation associated to $A \in M(\lambda,L)$.
\begin{proof}[Proof of Theorem \ref{liovu.intro}]
To simplify the notations, through the proof we set $k^*:=k^*(n,\lambda,L), \alpha \in (0,k^*), R:=3/4$ and
$$
D(x_0,u,r):=\frac{1}{r^{n-2+2\alpha}}\int_{B_r(x_0)}|\nabla u|^2 \,dx.
$$
Now, let $\eta_\eps \in C^\infty_c(B_\eps)$ be a standard mollifier and consider the regularized matrix $A^\eps := (a_{ij}^\eps)_{ij} \in M(\lambda,L)$ with entries $a_{ij}^\eps :=a_{ij}\ast\eta_\eps$. Then, set $u_\eps\in C^\infty(B_1)$ to be the unique solution to
$$
\mathrm{div}(A^\eps \nabla u_\eps)= 0 \quad\mbox{in }B_1,\qquad
u_\eps = u \quad\mbox{on }\partial B_1,
$$
approximating the function $u$ in the strong topology of $H^1(B_1)$, as $\eps\to 0^+$. \gio{By combining the Campanato's embedding with the strong convergence of the approximation $u_\eps$, it is sufficient to show
\be\label{e:claim}
\sup_{x_0 \in \overline{B_R}}\sup_{r \leq \frac{1-R}{2}}\left(R-|x_0|\right)^{n-2+2\alpha}D(x_0,u_\eps,r)\leq C
\ee
with $C>0$ depending only on $n,\lambda$ and $L$. Indeed, by exploiting the uniform-in-$\eps$ bound \eqref{e:claim} for $x_0 \in B_{1/2}\subset B_R, r\leq 1/8$, we get the claimed result.\\
By contradiction, suppose there exist $A_j:=A_{\eps_j}\in M(\lambda,L), u_{j}:=u_{\eps_j}\in H^1(B_1)$ a smooth solutions to \eqref{equation}, such that
$$
\lVert u_{j}\rVert_{L^\infty(B_1)}\leq 1,\qquad \mbox{and}\qquad
L^2_j := \sup_{x_0 \in \overline{B_R}}\sup_{r \leq \frac{1-R}{2}}\left(R-|x_0|\right)^{n-2+2\alpha}D(x_0,u_j,r)
\to +\infty.
$$
By Caccioppoli inequality, we know that $D(0,u_j,7/8)\leq C$ for some $C>0$ depending only on $n,\lambda$ and $L$. Moreover, there exists some maximal points $x_j \in \overline{B_R}, r_j\leq (1+R)/2$ such that
\be\label{e:mi}
(R-|x_0|)^{n-2+2\alpha}D(x_0,u_j,r)\leq
(R-|x_j|)^{n-2+2\alpha}D(x_j,u_j,r_j),
\ee
for every $x_0 \in \overline{B_R}, r\leq (1+R)/2$. On the other hand, if we set $\overline{R}_j := (R-|x_j|)/r_j$, we have
\begin{align*}
L_j^2 &= (R-|x_j|)^{n-2+2\alpha}D(x_j,u_j,r_j)\\
&\leq D(0,u_j,7/8) \left(\frac{7(R-|x_j|)}{8 r_j}\right)^{n-2+2\alpha} 
\leq C \overline{R}_j^{n-2+2\alpha}
\end{align*}
where, in light of the absurd hypotheses, it implies $\overline{R}_j \to +\infty$. We stress that
\be\label{e:ni}
\frac{R-|x_j|}{2} 
 \leq R-|x_0| \leq \frac{3(R-|x_j|)}{2},\qquad \mbox{for }x_0 \in B_{\frac{R-|x_j|}{2}}=B_{\frac12 r_j \overline{R}_j} .
\ee

\noindent Step 1 - Blow-up sequence. Let us introduce the following blow-up sequence
    \be\label{blowup}
    \overline{u}_{j}(x) := \frac{u_j(x_j + r_j x) - u_j(x_j)}{r_j^\alpha D(x_j,u_k,r_j)^{1/2}}\quad\mbox{for }x \in B_{x_j,r_j}=\frac{B-x_j}{r_j},
    \ee
    such that $\overline{u}_j(0)=0$. Moreover, by direct computation, we have $\mathrm{div}(\overline{A}_j(x) \nabla \overline{u}_j)=0$ in $B_{x_j,r_j}$, with $\overline{A}_j(x) := A_j(x_j+r_j x) \in M(\lambda,L)$, and
$$
D(z,\overline{u}_j,\rho)=\frac{D(x_j+r_j z, u_j, \rho r_j)}{D(x_j,u_j,r_j)},\qquad \mbox{for }z,\rho\mbox{ such that }B_\rho(z) \subset B_{x_j,r_j}.
$$
Thus, we deduce that $D(0,\overline{u}_j,1 ) = 1$ and, by \eqref{e:mi}-\eqref{e:ni}, that for every $z \in B_{\overline{R}_j}, \rho < (1-R)/r_j$ we have
\be\label{unif}
D(z,\overline{u}_j,\rho) = \frac{D(x_j+r_j z, u_j, \rho r_j)}{D(x_j,u_j,r_j)}\leq \left(\frac{R-|x_j|}{R-|x_j+r_j z|}\right)^{n-2+2\alpha}\leq 2^{n-2+2\alpha}.
\ee
On the other hand, by applying the Caccioppoli inequality to the rescaled equation, we get
\be\label{no-trivial}
1 = D(0,\overline{u}_j,1) 
\leq C(n,\lambda,L) \int_{B_2} \overline{u}_j^2 \,dx,
\ee
for every $j\geq j_0$ such that $B_r\subset B_{x_j,r_j}$.\\

 \noindent    Step 2 - Compactness. Fixed $\rho>0$ there exists $j_0>0$ such that $\rho< \overline{R}_j$ and $\rho<(1-R)/(r_j)$, for every $j\geq j_0$. Therefore, by rephrasing \eqref{unif}, we get
 $$
 \sup_{z \in \overline{B}_\rho}\sup_{r\leq \rho/2}D(z,u_j,r)\leq 2^{n-2+2\alpha},\qquad \mbox{for }j\geq j_0,
 $$}
 which implies, by Campanato's embedding, that the $\alpha$-H\"{o}lder seminorm of $\overline{u}_j$ is uniformly bounded in $B_\rho$, for $j$ sufficiently large. Since $\overline{u}_j(0)=0$ we infer, by a diagonal argument, that up to a subsequence $\overline{u}_j$ converges weakly in $H^1_\loc(\R^n),$ strongly in $L^2_\loc(\R^n)$ and uniformly on every compact set, to some $\overline{u}\in H^1_\loc(\R^n)\cap C^{0,\alpha}_\loc(\R^n)$. Moreover, by combining the strong convergence in $L^2_\loc(\R^n)$ with \eqref{no-trivial}, we deduce that $\overline{u}$ is a non-trivial function satisfying $\norm{\overline{u}}{L^2(B_2)}\geq C>0$.\\

We can conclude Step 2 by proving that the blow-up limit $\overline{u}$ is a weak solution of an elliptic PDE of the form \eqref{equation.global}, for some matrix in $M(\lambda,L)$. Indeed, by the compactness property \cite[Proposition 3]{spagnolo1} of $M(\lambda,L)$, the sequence $(\overline{A}_j)_j$ admits a $G$-convergent subsequence on every compact subset of $\R^n$ and Proposition \ref{closed} guarantees the existence of $\overline{A}\in M(\lambda,L)$ such that
\be\label{eq.limite}
\int_{\R^n}\overline{A} \nabla \overline{u} \cdot \nabla \varphi\, \,dx= 0 \quad \mbox{for every }\varphi \in C^\infty_c(\R^n),
\ee
as we claimed.\\

\noindent  Step 3 - Liouville theorem.
Lastly, by the global H\"{o}lder regularity of $\overline{u}$, since $\overline{u}(0)=0$ we infer that
$$
  |\overline{u}(x)|\leq C\abs{x}^\alpha\quad\mbox{in }\R^n.
$$
Therefore, since $\alpha < k^*$, by definition of optimal Liouville exponent in $M(\lambda,L)$, we deduce that $\overline{u}\equiv 0$ in $\R^n$, in contradiction with non-triviality of the blow-up limit due to \eqref{no-trivial}.
\end{proof}
\begin{remark}
  The previous result can be rephrased as follows in the case of a specific equation of the form \eqref{equation}: given $A\in M(\lambda,L)$ let $\mathcal{F}_A\in M(\lambda,L)$ be the family of all the possible $G$-limits of the matrices $(A(x_k+r_k \cdot))_k$ with $x_k \in B_{1/2},r_k \searrow 0^+$. Then, the same conclusion holds by considering the optimal Liouville exponent of the family $\mathcal{F}_A$.
\end{remark}
We conclude the Section with a simple application of the study proposed in the paper. In \cite[Secion 2]{serrin-weinberger} the authors proved the existence an optimal Liouville-type exponent $k^*(n,\lambda,L)$ in $M(\lambda,L)$ by using the iteration argument introduce by J. Moser in \cite[Section 6]{moser1961} \gio{(see also \cite{Chipot} where a similar result is proved in the context of "autonomous" variational problem)}. Since the explicit expression of the optimal Liouville exponent $k^*$ is still an open problem, by using an argument of \cite{finn-gilbarg} they declare that if a matrix $A\in M(\lambda,L)$ converges to a constant matrix at infinity\footnote{On page 262 in \cite{serrin-weinberger}, the authors write that ``If the coefficients $a_{ij}$ tend to a limit at infinity, a simple change of variables reduces the problem to the case where $\lambda/L$ is arbitrarily near one. In this case Finn and Gilbarg showed...''. By reading the results in \cite{finn-gilbarg} we think that their statement deals only with the case of matrices $A \in M(\lambda,L)$ with continuous differentiable coefficients, converging at infinity to some constant matrix in $M(\lambda,L)$.}
then the optimal Liouville exponent $k^*_A$ associate to $A$ satisfies
$$
k^*_A \geq \sqrt{n-1}-\frac{n-2}{2},
$$
for $n\leq 6$ (see Lemma \ref{lem.finn-gilbar}). By using the concept of blow-down sequence, we improve the previous estimate by showing that the optimal Liouville exponent of a fixed matrix can be obtained in terms of its blow-down limit at infinity. Moreover, if it converges to some constant matrix we get that $k^*_A=1$.
\begin{proposition}\label{liouv.Finn}
Let $A,A_\infty \in M(\lambda,L)$ and suppose that
  \be\label{G-limit}
  A(x_0+r x) \xrightarrow{\text{ $\,\,G\,\,$ }}
  A_\infty(x)\quad\mbox{as}\quad r\to +\infty
  \ee
  for some $x_0 \in \R^n$. Thus, given $k^*_A,k^*_\infty \in(0,1]$ respectively the optimal Liouville exponent associated to $A$ and $A_\infty$, we have
  $$
  k^*_A \geq k^*_\infty.
  $$
  In particular, if $A_\infty \in M(\lambda,L)$ is a constant matrix we get $k^*_A=1$.
\end{proposition}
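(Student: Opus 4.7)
The plan is to argue by contradiction. Suppose $k^*_A < k^*_\infty$, fix an exponent $\gamma$ with $k^*_A < \gamma < k^*_\infty$, and, by the very definition of $k^*_A$ as a supremum, select a non-constant $u \in H^1_\loc(\R^n)$ solving $\mathrm{div}(A \nabla u) = 0$ in $\R^n$ with $|u(y)| \leq C(1+|y|^{\gamma})$. Subtracting a constant, we may take $u(x_0)=0$.

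I would then perform a blow-down analysis in the spirit of the blow-up argument in the proof of Theorem~\ref{liovu.intro}. Set $M(r) := \sup_{B_r(x_0)} |u|$, non-decreasing with $M(r) \leq C(1+r^{\gamma})$, and introduce
\[
v_r(x) := \frac{u(x_0 + rx)}{M(r)}, \qquad A_r(x) := A(x_0+rx),
\]
so that $\mathrm{div}(A_r \nabla v_r)=0$ in $\R^n$, $\|v_r\|_{L^\infty(B_1)}=1$, $v_r(0)=0$, and $A_r \xrightarrow{G} A_\infty$ as $r\to\infty$ by hypothesis \eqref{G-limit}. A preliminary dichotomy on the growth exponent $\gamma_M := \limsup_{r\to\infty} \log M(r)/\log r$ handles the subpolynomial case: if $\gamma_M=0$ then $|u(y)| \leq C_\varepsilon(1+|y|^{\varepsilon})$ for every $\varepsilon>0$, and choosing $\varepsilon < k^*_A$ (which is positive) forces $u$ to be constant by the very definition of $k^*_A$, a contradiction. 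Hence we may assume $\gamma_M \in (0,\gamma]$.

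The crucial technical step is the selection of a sequence $r_j \to \infty$ along which the doubling ratios $M(R r_j)/M(r_j)$ stay bounded by a constant $C(R)$, uniformly in $j$, for every $R > 1$. This is achieved by a standard averaging/doubling argument working in the logarithmic variable $\log M(e^{\cdot})$---whose polynomial upper bound ensures that the doubling excess has bounded average in log-scale---combined with a diagonal extraction over the dyadic scales $R=2^k$. Along such a subsequence, $v_{r_j}$ is uniformly bounded on every compact subset of $\R^n$; the uniform interior H\"{o}lder estimate of De Giorgi-Nash-Moser, combined with Ascoli-Arzel\`{a} and a further diagonalization, then produces a function $v_\infty \in H^1_\loc(\R^n) \cap C^{0,\alpha}_\loc(\R^n)$ (with $\alpha=\alpha(n,\lambda,L)>0$) such that $v_{r_j}\to v_\infty$ locally uniformly and weakly in $H^1_\loc(\R^n)$. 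By Theorem~\ref{spagno1} applied to $A_{r_j} \xrightarrow{G} A_\infty$, the limit solves $\mathrm{div}(A_\infty \nabla v_\infty)=0$ in $\R^n$. Because $|v_\infty(x)| \leq C(1+|x|^{\gamma})$ with $\gamma < k^*_\infty$, the Liouville property for $A_\infty$ forces $v_\infty$ to be constant; since $v_\infty(0)=0$, this gives $v_\infty\equiv 0$, contradicting $\|v_\infty\|_{L^\infty(B_1)}=\lim_j \|v_{r_j}\|_{L^\infty(B_1)}=1$.

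The ``in particular'' case follows from the main inequality: when $A_\infty$ is a constant matrix, a linear change of coordinates reduces $\mathrm{div}(A_\infty \nabla w)=0$ to Laplace's equation, and classical gradient estimates immediately give $k^*_{A_\infty}=1$ (entire harmonic functions with sublinear growth are constant). Combined with the a priori bound $k^*_A \in (0,1]$ built into the definition, the proven inequality $k^*_A \geq 1$ yields $k^*_A=1$. The most delicate point of the proof is the diagonal selection of the subsequence $r_j$ ensuring uniform local bounds for the blow-downs on every compact set; once this is in place the $G$-compactness machinery from Section~\ref{section.prel} and the De Giorgi-Nash-Moser regularity theory close the argument in a routine way.
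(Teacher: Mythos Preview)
Your overall strategy—contradiction, blow-down, $G$-compactness via Theorem~\ref{spagno1}, and the Liouville property for $A_\infty$—is exactly the paper's, and the treatment of the constant-coefficient case also agrees.

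The genuine weak point is the passage from your doubling selection to the assertion $|v_\infty(x)|\le C(1+|x|^\gamma)$. An averaging argument on $\log M(e^{t})$ together with a diagonal extraction yields scales $r_j$ at which $M(Rr_j)/M(r_j)\le C(R)$ for each fixed $R$, but it does \emph{not} force $C(R)\le C\,R^{\gamma}$; concretely, if the actual growth exponent $\gamma_M$ of $M$ is strictly smaller than $\gamma$, then $r_j^{\gamma}/M(r_j)\to\infty$ along any subsequence produced by one–step averaging, and the claimed polynomial bound on the limit is unjustified—so the Liouville step for $A_\infty$ cannot be invoked. Your dichotomy disposes only of the case $\gamma_M=0$ and leaves the range $0<\gamma_M<\gamma$ untouched. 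The paper avoids all of this with a single device: set
\[
\theta(r):=\sup_{\rho\ge r}\frac{\|u\|_{L^\infty(B_\rho)}}{\rho^{\gamma}},
\]
which is non-increasing, positive and finite; choose $r_j\ge j$ with $\|u\|_{L^\infty(B_{r_j})}\ge\tfrac12\,r_j^{\gamma}\theta(r_j)$, and normalise the blow-down by $r_j^{\gamma}\theta(r_j)$ rather than by $M(r_j)$. The monotonicity of $\theta$ then gives both $\|u_j\|_{L^\infty(B_1)}\ge\tfrac12$ and $\|u_j\|_{L^\infty(B_\rho)}\le\rho^{\gamma}$ for every $\rho\ge 1$ in one stroke—no dichotomy and no diagonal extraction needed. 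Replacing your doubling step by this normalisation closes the gap and makes the argument complete.
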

\begin{proof}
Assume by contradiction that
  $$
  k^*_A < k^*_\infty
  $$
  and, by the definition of $k^*_A$, that there exists a nontrivial solution $u \in H^1_\loc(\R^n)$ to \eqref{equation.global}, such that $u(0)=0$ \gio{and \eqref{e:gamma} holds true with $\gamma \leq k^*_A$}. It is not restrictive to assume that $x_0=0$: indeed, by a simple translation we get that $v(x):=u(x+x_0)$ is solution to
$$
\mathrm{div}(\tilde{A}\nabla v)=0\quad\mbox{in }\R^n, \quad\mbox{with }\tilde{A}(x):=A(x+x_0)\in M(\lambda,L).
$$
On the other hand, by the existence of the $G$-limit \eqref{G-limit} we get that
$$
\tilde{A}(rx) = A(x_0+rx) \xrightarrow{\text{ $\,\,G\,\,$ }}
  A_\infty(x)\quad\mbox{as}\quad r\to +\infty.
$$
Now, we divide the remaining part of the proof in two steps.\\

    {\it Step 1 - Blow-down sequence in the general case.}
 Given $r\geq 1$, set $A_r(x):=A(rx)$ and
  $$
  \theta(r):=\sup_{\rho\geq r}\frac{1}{\rho^{\gamma}}\norm{u}{L^\infty(B_\rho)}.
  $$
  Notice that the function $\theta$ is non-increasing and $\theta(r) \in (0,+\infty)$. Moreover, for every integer $j \in \N$, there exists $r_j\geq j$ such that
  $$
  \frac{1}{r_j^{\gamma}}\norm{u}{L^\infty(B_{r_j})} \geq \frac12\theta(j)\geq \frac12\theta(r_j).
  $$
  Thus, consider the blow-down sequence
  $$
  u_j(x):=\frac{1}{r_j^\gamma \theta(r_j)}u(r_j x)
  $$
  of solutions to
  $$\mathrm{div}(A_{r_j}\nabla u_j)=0\qquad\mbox{in }\R^n,
  $$ satisfying the growth estimate
  \be\label{xj1}
  \norm{u_j}{L^\infty(B_\rho)}= \frac{1}{r_j^\gamma \theta(r_j)}
  \norm{u}{L^\infty(B_{\rho r_j})}\leq \rho^\gamma,\quad\mbox{for }\rho\geq 1
  \ee
  and the non-degeneracy condition
  \be\label{xj2}
  \norm{u_j}{L^\infty(B_1)} =   \frac{\norm{u}{L^\infty(B_{r_j})}}{r_j^{\gamma}\theta(r_j)}\geq \frac12.
  \ee
 On the other hand, by the Caccioppoli inequality
    $$
\int_{B_{\rho}}{\abs{\nabla u_j}^2 \,dx}\leq \frac{C(n,\lambda,L)}{\rho^2}  \int_{B_{2\rho}}{u_j^2 \,dx} \leq C(n,\lambda,L) \rho^{n-2+2\gamma}, \quad\mbox{for }\rho\geq 1
$$
we get a uniform bound in $H^1(B_\rho)$ for the sequence $(u_j)_j$. Therefore, by the De Giorgi-Nash-Moser theorem (see otherwise Theorem \ref{new.way.intro}), the sequence $(u_k)_k$ converges, up to a subsequence, to some $u \in H^1_\loc(\R^n)$ uniformly on every compact set of $\R^n$ and strongly in $L^2_\loc(\R^n)$.\\
Finally, by combining Theorem \ref{spagno1} with \eqref{G-limit}, $u_\infty$ is a weak-solution to $$
\mathrm{div}(A_\infty \nabla u_\infty)=0\qquad\mbox{in }\R^n,
$$
satisfying $u_\infty(0)=0$ and
\be\label{nro}
\norm{u_\infty}{L^\infty(B_1)}\geq \frac12 \quad\mbox{and}\quad
\norm{u_\infty}{L^\infty(B_\rho)}\leq \rho^\gamma\quad\mbox{for every }\rho\geq 1.
\ee
Notice that, by \cite[Theorem 4]{kozlov} the existence of a $G$-limit \eqref{G-limit} as $r\to +\infty$ implies that the coefficients of $A_\infty$ are zero-homogeneous. Finally, the result follows by the definition of optimal Liouville exponent $k^*_\infty$, indeed since $\gamma\leq k^*_A<k^*_\infty$ we get that $u_\infty\equiv 0$, in contradiction with \eqref{nro}.\\

Whereas, if $A_\infty$ has constant coefficients, we can repeat the same argument by considering the ``freezed'' equation
$$
  \mathrm{div}(\tilde A(x)\nabla \tilde u) = 0 \quad\text{in $\R^n$},
$$
where $\tilde A(x):= (A^{-1/2}_\infty)^T A(A^{-1/2}_\infty x) A^{-1/2}_\infty$ and $\tilde u(x):=u(A^{1/2}_\infty x)$. Indeed, in such case we get that
$$
\tilde A(rx) \xrightarrow{\text{ $\,\,G\,\,$ }}
  \mathrm{Id} \quad\mbox{as}\quad r\to +\infty,
$$
from which it follows that $k^*_A\geq 1$.
\end{proof}

\noindent \textbf{Acknowledgments.} We are grateful to S. Terracini and B. Velichkov for useful discussions on the problem and to the referees for their observations and suggestions regarding the main Theorems.

\smallskip

{\noindent \textbf{Statements and Declarations.} The authors have no relevant financial or non-financial interests to disclose.}

\smallskip

{\noindent \textbf{Data availability.} Data sharing not applicable to this article as no datasets were generated or analysed during the current study.}


\begin{thebibliography}{99}

\bibitem{robinACF}
M.~Allen, D.~Kriventsov, and R.~Neumayer.
\newblock Sharp quantitative faber-krahn inequalities and the
  alt-caffarelli-friedman monotonicity formula.
\newblock 2021.

\bibitem{ACF}
H.~W. Alt, L.~Caffarelli, and A.~Friedman.
\newblock Variational problems with two phases and their free boundaries.
\newblock {\em Trans. Amer. Math. Soc.}, 282(2):431--461, 1984.

\bibitem{aronson}
D.~G. Aronson.
\newblock Bounds for the fundamental solution of a parabolic equation.
\newblock {\em Bull. Amer. Math. Soc.}, 73:890--896, 1967.

\bibitem{avella1}
M.~Avellaneda and F.-H. Lin.
\newblock Homogenization of {P}oisson's kernel and applications to boundary
  control.
\newblock {\em J. Math. Pures Appl. (9)}, 68(1):1--29, 1989.

\bibitem{avella2}
M.~Avellaneda and F.-H. Lin.
\newblock Un th\'{e}or\`eme de {L}iouville pour des \'{e}quations elliptiques
  \`a coefficients p\'{e}riodiques.
\newblock {\em C. R. Acad. Sci. Paris S\'{e}r. I Math.}, 309(5):245--250, 1989.

\bibitem{BMPV}
D.~Bucur, D.~Mazzoleni, A.~Pratelli, and B.~Velichkov.
\newblock Lipschitz regularity of the eigenfunctions on optimal domains.
\newblock {\em Arch. Ration. Mech. Anal.}, 216(1):117--151, 2015.

\bibitem{CJK}
L.~A. Caffarelli, D.~Jerison, and C.~E. Kenig.
\newblock Some new monotonicity theorems with applications to free boundary
  problems.
\newblock {\em Ann. of Math. (2)}, 155(2):369--404, 2002.

\bibitem{CK}
L.~A. Caffarelli and C.~E. Kenig.
\newblock Gradient estimates for variable coefficient parabolic equations and
  singular perturbation problems.
\newblock {\em Amer. J. Math.}, 120(2):391--439, 1998.

\bibitem{Chipot}
M.~Chipot and L.~C. Evans.
\newblock Linearisation at infinity and lipschitz estimates for certain
  problems in the calculus of variations.
\newblock {\em Proceedings of the Royal Society of Edinburgh: Section A
  Mathematics}, 102(3–4):291–303, 1986.

\bibitem{CTV}
M.~Conti, S.~Terracini, and G.~Verzini.
\newblock An optimal partition problem related to nonlinear eigenvalues.
\newblock {\em J. Funct. Anal.}, 198(1):160--196, 2003.

\bibitem{Almost-minACF}
G.~David, M.~Engelstein, M.~Smit Vega~Garcia, and T.~Toro.
\newblock Regularity for almost-minimizers of variable coefficient
  {B}ernoulli-type functionals.
\newblock {\em Math. Z.}, 299(3-4):2131--2169, 2021.

\bibitem{degiorgi}
E.~De~Giorgi.
\newblock Sulla differenziabilit\`a e l'analiticit\`a delle estremali degli
  integrali multipli regolari.
\newblock {\em Mem. Accad. Sci. Torino. Cl. Sci. Fis. Mat. Nat. (3)}, 3:25--43,
  1957.

\bibitem{fabes-stroock}
E.~B. Fabes and D.~W. Stroock.
\newblock A new proof of {M}oser's parabolic {H}arnack inequality using the old
  ideas of {N}ash.
\newblock {\em Arch. Rational Mech. Anal.}, 96(4):327--338, 1986.

\bibitem{finn-gilbarg}
R.~Finn and D.~Gilbarg.
\newblock Three-dimensional subsonic flows, and asymptotic estimates for
  elliptic partial differential equations.
\newblock {\em Acta Math.}, 98:265--296, 1957.

\bibitem{friedland}
S.~Friedland and W.~K. Hayman.
\newblock Eigenvalue inequalities for the {D}irichlet problem on spheres and
  the growth of subharmonic functions.
\newblock {\em Comment. Math. Helv.}, 51(2):133--161, 1976.

\bibitem{GiaMart}
M.~Giaquinta and L.~Martinazzi.
\newblock An introduction to the regularity theory for elliptic systems.
  harmonic maps and minimal graphs. scuola normale superiore, pisa, 2012.

\bibitem{gilbargserrin}
D.~Gilbarg and J.~Serrin.
\newblock On isolated singularities of solutions of second order elliptic
  differential equations.
\newblock {\em J. Analyse Math.}, 4:309--340, 1955/56.

\bibitem{gruter-widman}
M.~Gr\"{u}ter and K.-O. Widman.
\newblock The {G}reen function for uniformly elliptic equations.
\newblock {\em Manuscripta Math.}, 37(3):303--342, 1982.

\bibitem{kozlov}
S.~M. Kozlov.
\newblock Asymptotic behavior of fundamental solutions of divergence
  second-order differential equations.
\newblock {\em Mat. Sb. (N.S.)}, 113(155)(2(10)):302--323, 351, 1980.

\bibitem{lin-ni}
F.-H. Lin and W.-M. Ni.
\newblock On the least growth of harmonic functions and the boundary behavior
  of {R}iemann mappings.
\newblock {\em Comm. Partial Differential Equations}, 10(7):767--786, 1985.

\bibitem{LSW}
W.~Littman, G.~Stampacchia, and H.~F. Weinberger.
\newblock Regular points for elliptic equations with discontinuous
  coefficients.
\newblock {\em Ann. Scuola Norm. Sup. Pisa Cl. Sci. (3)}, 17:43--77, 1963.

\bibitem{marino-spagnolo}
A.~Marino and S.~Spagnolo.
\newblock Un tipo di approssimazione dell'operatore {$\sum_1^nij
  D_i(a_{ij}(x)D_j)$} con operatori {$\sum_1^njD_j(\beta (x)D_j)$}.
\newblock {\em Ann. Scuola Norm. Sup. Pisa Cl. Sci. (3)}, 23:657--673, 1969.

\bibitem{mateACF}
N.~Matevosyan and A.~Petrosyan.
\newblock Almost monotonicity formulas for elliptic and parabolic operators
  with variable coefficients.
\newblock {\em Commun. Pure Appl. Math.}, 64(2):271--311, 2011.

\bibitem{meyers}
N.~G. Meyers.
\newblock An {$L^{p}$}e-estimate for the gradient of solutions of second order
  elliptic divergence equations.
\newblock {\em Ann. Scuola Norm. Sup. Pisa Cl. Sci. (3)}, 17:189--206, 1963.

\bibitem{morrey1938}
C.~B. Morrey, Jr.
\newblock On the solutions of quasi-linear elliptic partial differential
  equations.
\newblock {\em Trans. Amer. Math. Soc.}, 43(1):126--166, 1938.

\bibitem{moser1960}
J.~Moser.
\newblock A new proof of {D}e {G}iorgi's theorem concerning the regularity
  problem for elliptic differential equations.
\newblock {\em Comm. Pure Appl. Math.}, 13:457--468, 1960.

\bibitem{moser1961}
J.~Moser.
\newblock On {H}arnack's theorem for elliptic differential equations.
\newblock {\em Comm. Pure Appl. Math.}, 14:577--591, 1961.

\bibitem{moser1964}
J.~Moser.
\newblock A {H}arnack inequality for parabolic differential equations.
\newblock {\em Comm. Pure Appl. Math.}, 17:101--134, 1964.

\bibitem{moser-struwe}
J.~Moser and M.~Struwe.
\newblock On a {L}iouville-type theorem for linear and nonlinear elliptic
  differential equations on a torus.
\newblock {\em Bol. Soc. Brasil. Mat. (N.S.)}, 23(1-2):1--20, 1992.

\bibitem{nash}
J.~Nash.
\newblock Continuity of solutions of parabolic and elliptic equations.
\newblock {\em Amer. J. Math.}, 80:931--954, 1958.

\bibitem{LSnoris}
B.~Noris.
\newblock Il lemma di {M}onotonia, {T}esi di {L}aurea specialistia,
  {U}niversità degli studi di {M}ilano-{B}icocca.
\newblock 2005.

\bibitem{NTTV}
B.~Noris, H.~Tavares, S.~Terracini, and G.~Verzini.
\newblock Uniform {H}\"older bounds for nonlinear {S}chr\"odinger systems with
  strong competition.
\newblock {\em Comm. Pure Appl. Math.}, 63(3):267--302, 2010.

\bibitem{piccinini-spagnolo}
L.~C. Piccinini and S.~Spagnolo.
\newblock On the {H}\"{o}lder continuity of solutions of second order elliptic
  equations in two variables.
\newblock {\em Ann. Scuola Norm. Sup. Pisa Cl. Sci. (3)}, 26:391--402, 1972.

\bibitem{serrin-weinberger}
J.~Serrin and H.~F. Weinberger.
\newblock Isolated singularities of solutions of linear elliptic equations.
\newblock {\em Amer. J. Math.}, 88:258--272, 1966.

\bibitem{soave2020anisotropic}
N.~Soave and S.~Terracini.
\newblock An anisotropic monotoncity formula, with applications to some segregation problems. {\newblock \em J. Eur. Math. Soc.}
 25 (9), 3727–3765, 2023.


\bibitem{spagnolo2}
S.~Spagnolo.
\newblock Sul limite delle soluzioni di problemi di {C}auchy relativi
  all'equazione del calore.
\newblock {\em Ann. Scuola Norm. Sup. Pisa Cl. Sci. (3)}, 21:657--699, 1967.

\bibitem{spagnolo1}
S.~Spagnolo.
\newblock Sulla convergenza di soluzioni di equazioni paraboliche ed
  ellittiche.
\newblock {\em Ann. Scuola Norm. Sup. Pisa (3) 22 (1968), 571-597; errata,
  ibid. (3)}, 22:673, 1968.

\bibitem{spagnolo-survey}
S.~Spagnolo.
\newblock Convergence in energy for elliptic operators.
\newblock In {\em Numerical solution of partial differential equations, {III}
  ({P}roc. {T}hird {S}ympos. ({SYNSPADE}), {U}niv. {M}aryland, {C}ollege
  {P}ark, {M}d., 1975)}, pages 469--498, 1976.

\bibitem{widman}
K.-O. Widman.
\newblock H\"{o}lder continuity of solutions of elliptic systems.
\newblock {\em Manuscripta Math.}, 5:299--308, 1971.

\end{thebibliography}
\end{document}